\newtheorem{thm}{Theorem}
\newtheorem{conj}[thm]{Conjecture}
\newtheorem{lemma}[thm]{Lemma}
\newtheorem{definition}[thm]{Definition}
\newtheorem{prop}[thm]{Proposition}
\newtheorem{example}[thm]{Example}
\newtheorem{Remark}[thm]{Remark}
\newtheorem{oq}[thm]{Open question}
\newtheorem{fakethm}{Theorem}
\newtheorem{fakeconj}{Conjecture}
\numberwithin{thm}{section} 
\numberwithin{equation}{section} 
\numberwithin{figure}{section} 
\newcommand*{\R}{\mathbb{R}}
\newcommand*{\C}{\mathbb{C}}
\renewcommand*{\S}{\Sigma}
\newcommand*{\mc}{\mathcal}
\newcommand*{\bb}{\mathbb}
\newcommand*{\del}{\partial}
\newcommand*{\delbar}{\bar\partial}
\newcommand*{\cotang}{T^*\mc{T}^n}
\newcommand*{\T}{\mc{T}}
\DeclareMathOperator{\Rep}{Rep}
\DeclareMathOperator{\id}{id}
\DeclareMathOperator{\Hom}{Hom}
\DeclareMathOperator{\PSL}{PSL}
\DeclareMathOperator{\SL}{SL}
\DeclareMathOperator{\Hilb}{Hilb}
\DeclareMathOperator{\Symp}{Symp}
\author{Alexander Thomas}
\title{Differential Operators on Surfaces and Rational WKB Method}
\address{Max-Planck-Institute for Mathematics, Vivatsgasse 7, 53111 Bonn, Germany}
\email{athomas@mpim-bonn.mpg.de}
\begin{document}

\begin{abstract}
In this paper, we give a simple and geometric, but formal, description of an open subset of the character variety of surface groups into $\SL_n(\C)$. The main ingredient is a modified version of the WKB method, which we call rational WKB method. 

The geometric interpretation uses higher complex structures introduced by Vladimir Fock and the author. More precisely, the character variety is parametrized by the cotangent bundle of the moduli space of higher complex structures. This generalizes the well-known description of the moduli space of flat $\SL_2(\C)$-connections by the cotangent bundle of Teichmüller space.
\end{abstract}

\maketitle

\tableofcontents

\section{Introduction}

The motivating question for this paper is \textit{``What is a linear differential operator on a surface?''} To be more specific: what kind of global object on a surface can be seen as a generalization of differential operators and how can we parametrize these objects in a simple way?

In dimension 1, a linear differential operator can be put into the form $$D = d^n-t_1d^{n-1}-t_2d^{n-2}-...-t_n$$
where we have put $d=\frac{d}{dx}$.
Such a differential operator of order $n$ is equivalent to a matrix-valued differential operator of order 1. In terms of differential equations, the equation $$(d^n-t_1d^{n-1}-t_2d^{n-2}-...-t_n)\psi=0$$
is equivalent to $$(d-A)\Psi=0$$ where
$$A = \begin{pmatrix} &1&&\\ && \ddots &\\ &&&1\\ t_n & t_{n-1} &\cdots & t_1\end{pmatrix} \;\text{ and }\; \Psi=\begin{pmatrix}\psi\\ d\psi \\ \vdots \\ d^{n-1}\psi\end{pmatrix}.$$
 
In this paper, we will use the slightly different form $\Psi^T(d-A^T)=0$ where the differential operator acts from the right. This allows to use the transpose $A^T$ in which the parameters $t_i$ are in a column, allowing a better geometric interpretation (via Equation \eqref{eqqq1} below).

The global nature of an expression of the local form $d+A$, where $A$ is a matrix-valued 1-form, is a \emph{connection} on some bundle. Thus, a differential operator on a manifold should be a connection. We also require that a linear differential operator of order $n$ should have $n$ independent solutions. This implies that the connection has to be \textit{flat} (i.e. with vanishing curvature). This is automatically true in dimension 1. Here we will work in dimension 2, over a surface $S$.


A flat connection allows to define parallel transport. This gives a representation of the fundamental group $\pi_1(\S)$ into the structure group $G$ via its monodromy. We denote by $$\Rep(\pi_1(\S),G) = \Hom(\pi_1(S),G)/G$$ the representation variety, or \textbf{character variety}.
Any point in the character variety comes from a flat connection. This is (a special case of) the \emph{Riemann--Hilbert correspondence} between the moduli space of flat connections and the character variety.

Therefore, our goal is to describe the character variety $\Rep(\pi_1(\S),G)$ in simple terms which allow the interpretation of its points as differential operators on the surface.

One might have the following idea: since we have a nice description of differential operators in dimension 1, if we equip $\S$ with a complex structure, then we have a manifold of dimension 1 (over $\C$). This idea was carried out by Drinfeld--Beilinson \cite{beilinson2005opers}, upon work of Drinfeld--Sokolov \cite{drinfeld1984lie}, and lead to the notion of an \textbf{oper}. The problem with this approach is that the space of all opers is not big enough. In fact, the dimension of the space of opers is only half the dimension of the character variety. The description of the character variety in this paper can be seen as a generalization of the notion of an oper.

\vspace{0.5cm}
\textbf{Summary and results.}
The main theorems of the paper provide a simple and geometric description of an open subset of the character variety $\Rep(\pi_1\S,\SL_n(\C))$, the moduli space of flat $\SL_n(\C)$-connections. The geometric interpretation involves higher complex structures defined by Vladimir Fock and the author in \cite{Fock-Thomas}. Since we are not analyzing the convergence, we only obtain a formal description.

Consider a closed surface $\S$ of genus at least 2. We fix a reference complex structure on $\S$. We consider a complex vector bundle $V$ of rank $n$ over $\S$ of degree zero (since we are interested in flat connections). Our goal is to describe the gauge classes of flat connections on $V$. We stress that we are interested in smooth connections, not necessarily holomorphic.

A connection is flat if and only if it allows a basis of flat sections. So to assure flatness, it is sufficient to find a basis of solutions to the equation $(d+A)\Psi=0$. The main idea for our description is to use the WKB method which allows to analyze solutions to differential equations with semi-classical parameter $h$. In order to get a description which is as nice as possible, we have to modify the WKB ansatz. We call this modification \emph{rational WKB method} since we will deal with fractional powers of $h$.

The description we get in this paper uses several steps:
\begin{enumerate}
	\item First we reduce a flat connection to a standard form using the gauge freedom, getting as close as possible to an oper. For $\SL_n(\C)$, the standard form is parametrized by two sets of variables $(\hat{t}_2,...,\hat{t}_n)$ and $(\hat{\mu}_2,...,\hat{\mu}_n)$. The flatness gives compatibility conditions between the $\hat{t}_k$ and $\hat{\mu}_k$.
	\item We add a formal parameter $h$ and consider a 1-parameter family of $h$-connections. This allows to use the WKB method and pushes higher order derivatives into higher orders of $h$. Our parameters become functions of $h$. The constant term of $\hat\mu_k(h)$ is denoted by $\mu_k$.
	\item The compatibility conditions are linear in $\hat{\mu}_k$ (and its derivatives), but are non-linear in the $\hat{t}_k$. To push the non-linear terms into higher orders, we impose 
	\begin{equation*}
	\hat{t}_k(h) = ht_k+\mathcal{O}(h^2) \;\forall\,  k.
	\end{equation*}
	\item Running the usual WKB method with this constraint fails. But a modified version with fractional powers of $h$ works.
\end{enumerate}

Using these steps, we can extract tensors out of flat connections (Theorem \ref{mainthm-1} below):
\begin{fakethm}\label{mainthm-intro}
For an open subset of $\Rep(\pi_1\S,\SL_n(\C))$, we can associate to a flat connection a collection of tensors $(t_k, \mu_k)$, where $t_k$ is of type $(k,0)$ and $\mu_k$ of type $(1-k,1)$, satisfying for all $k$:
\begin{equation}\label{condition-C}
(-\bar{\partial}\!+\!\mu_2\partial\!+\!k\partial\mu_2)t_{k}+\sum_{l=1}^{n-k}((l\!+\!k)\partial\mu_{l+2}+(l\!+\!1)\mu_{l+2}\partial)t_{k+l}=0 ~.
\end{equation}
\end{fakethm}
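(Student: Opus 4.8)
The plan is to follow the four-step program sketched in the introduction, converting the flatness equation into the tensorial identity \eqref{condition-C} by a gauge reduction followed by a fractional WKB analysis. Throughout I work over the fixed reference complex structure, writing a connection as $d + A_z\,dz + A_{\bar z}\,d\bar z$ and using that flatness is the single matrix equation $\partial_{\bar z}A_z - \partial_z A_{\bar z} + [A_{\bar z}, A_z] = 0$.

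First I would carry out the gauge reduction of Step 1. Over an open (generic) subset of $\Rep(\pi_1\S,\SL_n(\C))$ the bundle admits a cyclic vector for $A_z$, so a smooth $\SL_n(\C)$-valued gauge transformation brings $A_z$ into companion form with entries $(\hat t_2, \dots, \hat t_n)$, the trace being zero. I then use the residual gauge freedom, namely the stabilizer of the companion form, to normalize $A_{\bar z}$, whose remaining degrees of freedom are packaged into the $(1-k,1)$-data $(\hat\mu_2, \dots, \hat\mu_n)$. Substituting this standard form into the flatness equation and reading off matrix entries produces the compatibility conditions; the structural point to record is that these are \emph{linear} in the $\hat\mu_k$ and their $\partial$-derivatives but \emph{nonlinear} in the $\hat t_k$, the nonlinearity originating from the bracket $[A_{\bar z},A_z]$ and from derivatives landing on companion entries.

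Next comes the WKB rescaling of Steps 2--4. I introduce the semiclassical parameter by replacing $d$ with $h\,d$, write the parameters as functions of $h$, and impose $\hat t_k(h) = h\,t_k + \mathcal O(h^2)$ to push the nonlinear $\hat t_k$-terms to higher order. The crucial observation is that the companion matrix $A_z$ then has characteristic polynomial $\lambda^n - \hat t_2\lambda^{n-2} - \dots - \hat t_n$ with every coefficient of size $\mathcal O(h)$, so its eigenvalues scale like $h^{1/n}$; this forces the WKB action and the diagonalizing gauge to be expanded in powers of $h^{1/n}$ rather than $h$, which is exactly the rational WKB method. I would then construct, order by order in $h^{1/n}$, the formal gauge transformation diagonalizing the $h$-connection, and define $\mu_k$ as the constant term of $\hat\mu_k(h)$ together with the leading $t_k$ from the rescaling.

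Finally, I would extract \eqref{condition-C} as the leading-order part of the compatibility conditions under this expansion and verify that the resulting tensors are well defined. Collecting the lowest surviving power of $h^{1/n}$ in the compatibility equation for $\hat t_k$ should leave exactly the Beltrami-type operator $(-\bar\partial + \mu_2\partial + k\,\partial\mu_2)t_k$ plus the coupling sum $\sum_{l=1}^{n-k}\bigl((l+k)\partial\mu_{l+2} + (l+1)\mu_{l+2}\partial\bigr)t_{k+l}$, with all higher $\hat t$-nonlinearities relegated to higher order. I expect the main obstacle to be precisely this bookkeeping: matching the integer coefficients $(l+k)$ and $(l+1)$ requires careful tracking of how each $\partial$ in the flatness equation distributes across the companion and $\mu$-entries and of which fractional power of $h$ each term carries. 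Simultaneously I must check that the $(t_k,\mu_k)$ transform as tensors of the claimed types and are independent of the residual gauge choices, so that they descend to genuine invariants of the flat connection on an open subset of the character variety.
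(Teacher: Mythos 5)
Your setup matches the paper's: gauge reduction to companion (Frobenius) form, introduction of the semiclassical parameter with $\hat t_k(h)=h\,t_k+\mathcal O(h^2)$, and an expansion in powers of $h^{1/n}$ (your justification via the eigenvalue scaling of a companion matrix with $\mathcal O(h)$ coefficients is a nice touch the paper only implies). The tensoriality of $(t_k,\mu_k)$ is also handled the same way. But there is a genuine gap at the decisive step: you never actually derive conditions $(\mathcal{C})$. You write that collecting the lowest surviving power of $h^{1/n}$ ``should leave exactly'' the operator $(-\bar\partial+\mu_2\partial+k\,\partial\mu_2)t_k$ plus the coupling sum with coefficients $(l+k)$ and $(l+1)$, and you yourself flag this bookkeeping as the main obstacle. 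That bookkeeping \emph{is} the content of the theorem; a proof that defers it is not a proof. The difficulty is real: the raw flatness/compatibility conditions do not split cleanly by themselves (compare Equation \eqref{n3-wrong} for $n=3$, where the two conditions appear mixed with a factor $\del s_0$), and extracting the stated integer coefficients for general $n$ by brute force on the matrix entries of the standard form is exactly what the paper avoids doing.

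The paper closes this gap with an idea your proposal lacks. After the rational WKB ansatz, the leading part of the system \eqref{flat-section-h-sln} simplifies to \eqref{flat-section-simplified}, which is read as a Hamilton--Jacobi equation: setting $x=\del s$, $y=\delbar s$, the function $s$ is a generating function for the surface $L\subset\C^2$ cut out by $P(x)=x^n-ht_2x^{n-2}\pm\dots+(-1)^{n-1}ht_n$ and $Q(x,y)=-y+\mu_2x-\dots+(-1)^n\mu_nx^{n-1}$. Compatibility of the system is then equivalent to $L$ being Lagrangian modulo $h^2$, i.e.\ to $\{P,Q\}=0$ modulo the ideal $\langle P,Q\rangle$, and Proposition \ref{spectralcurveprop} (imported from \cite{Fock-Thomas}) identifies precisely this Poisson-bracket condition with conditions $(\mathcal{C})$ up to signs, which is where the coefficients $(l+k)$ and $(l+1)$ come from. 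So either you must invoke this Lagrangian/Poisson-bracket characterization (or an equivalent result), or you must carry out in full the coefficient computation you postponed; as written, your argument establishes the framework but not the identity \eqref{condition-C} itself. (A minor remark: your final check of independence from residual gauge choices is not needed for this statement --- the theorem only asserts that one can associate such tensors; well-definedness up to higher diffeomorphisms is the subject of Theorem \ref{mainthm-2}, Step 3.)
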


We refer to the constraints in Equation \eqref{condition-C} as \textbf{conditions $(\mathcal{C})$}. Note that for $\mu_k=0$ for all $k$, we simply get $\delbar t_k=0$. This is the subset of the character variety described by opers.

The main advantage of this description of the character variety is that \emph{it allows a geometric interpretation}. 
These conditions $(\mathcal{C})$ also appear in the description of the cotangent bundle of the moduli space of \emph{higher complex structures} $\cotang$. For $n=2$ this is nothing but the \emph{cotangent bundle of Teichmüller space}. 

\medskip
\emph{Higher complex structures} are geometric structures on surfaces generalizing the complex structure introduced by Vladimir Fock and the author in \cite{Fock-Thomas} to get a geometric approach to Hitchin's work \cite{hitchin1992lie}.
Their construction uses the punctual Hilbert scheme of the plane. Their moduli space $\T^n$ is the space of all higher complex structures modulo hamiltonian diffeomorphisms of $T^*\S$ preserving the zero-section (called \emph{higher diffeomorphisms}). $\T^n$ has similar properties to Hitchin's component of the real character variety $\Rep(\pi_1\S,\PSL_n(\R))$ (see Theorem \ref{mainresultncomplex} below), leading to the main conjecture: \emph{the moduli space of higher complex structures $\T^n$ is canonically diffeomorphic to Hitchin's component}. This would give a geometric approach to higher Teichmüller theory.

The tensors constructed in Theorem \ref{mainthm-intro} allow to link higher complex structures to the character variety (Theorem \ref{mainthm-2} below):
\begin{fakethm}\label{thm-mainmap}
There is a well-defined formal map 
\begin{equation}\label{mainmap-intro}
\omega: \cotang\to \Rep(\pi_1\S,\SL_n(\C))
\end{equation}
whose image is an open subset of the character variety. Since both manifolds have the same dimension, $\omega$ is locally a homeomorphism on the set of convergence.
\end{fakethm}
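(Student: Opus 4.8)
The plan is to build $\omega$ as an explicit inverse to the tensor-extraction procedure of Theorem \ref{mainthm-intro}, exploiting that the two spaces are cut out by the same equations. I would take as established (from the description of higher complex structures recalled in the paper) that a point of $\cotang$ is the same datum as a collection of tensors $(t_k,\mu_k)_{k=2}^{n}$, with $t_k$ of type $(k,0)$ and $\mu_k$ of type $(1-k,1)$, satisfying conditions $(\mathcal{C})$ of Equation \eqref{condition-C}, modulo the residual action of higher diffeomorphisms: the base $\T^n$ is parametrized by the higher Beltrami data $(\mu_2,\dots,\mu_n)$, the cotangent covectors by $(t_2,\dots,t_n)$, and the Hamiltonian reduction forces exactly $(\mathcal{C})$. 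Given such a solution, I would run the rational WKB construction backwards: introduce the formal parameter $h$, set $\hat{t}_k(h)=ht_k+\mathcal{O}(h^2)$ and let $\hat\mu_k(h)$ have constant term $\mu_k$, and then solve the flatness/compatibility conditions order by order in the fractional powers of $h$. Since these conditions are linear in the $\hat\mu_k$ and their derivatives and their leading part is precisely $(\mathcal{C})$, the higher-order corrections are determined recursively, producing a formal family of flat $h$-connections $d+A(h)$ in standard form. Taking monodromy via the Riemann--Hilbert correspondence yields a conjugacy class of representations $\pi_1(\S)\to\SL_n(\C)$, which defines $\omega$ wherever the series converge.

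Next I would verify that $\omega$ is well defined, i.e. independent of the gauge-fixing to standard form and invariant under higher diffeomorphisms. The first point follows because any two standard forms of the same connection differ by a residual gauge transformation, which leaves the monodromy class unchanged. The second point is where the quotient defining $\T^n$ must be matched to gauge equivalence of connections: higher diffeomorphisms act on $(t_k,\mu_k)$ exactly through such residual gauges, so the resulting character does not change along the fibers of the quotient. Together these show that $\omega$ descends from the space of $(\mathcal{C})$-solutions to $\cotang$.

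For the image and local-homeomorphism statements I would compose $\omega$ with the map of Theorem \ref{mainthm-intro}. On the open subset of $\Rep(\pi_1\S,\SL_n(\C))$ covered by that theorem, tensor-extraction sends a flat connection to a solution of $(\mathcal{C})$, hence to a point of $\cotang$, and the reconstruction $\omega$ is its inverse: composing the two recovers the standard form of the connection term by term, and recovers the original tensors. This exhibits $\omega$ as injective with open image. Since $\dim\cotang=(2g-2)(n^2-1)=\dim\Rep(\pi_1\S,\SL_n(\C))$, the equality of dimensions together with a formal inverse-function argument upgrades this bijection onto its image to a local homeomorphism on the set of convergence, as claimed.

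I expect the main obstacle to be the well-definedness and descent step: precisely matching the higher-diffeomorphism quotient that defines $\T^n$ with the residual gauge freedom of the standard form, so that the monodromy class is genuinely a function on $\cotang$ rather than merely on the space of $(\mathcal{C})$-solutions. The other technical heart of the argument is the recursive solvability of the rational WKB series, namely checking that no obstruction arises at any fractional order of $h$, so that a flat connection is produced and the composition with the extraction map of Theorem \ref{mainthm-intro} is truly the identity.
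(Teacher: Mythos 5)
Your overall skeleton (reconstruct a flat $h$-connection by running the rational WKB method backwards, then check descent under higher diffeomorphisms, then a dimension count) matches the paper's strategy, but there is a genuine gap at the heart of the reconstruction step: you treat the higher order terms of the $\hat{t}_k$ as if they were determined by the recursion, and they are not. At each fractional order $h^{k/n}$ with $k\ge n$, the system \eqref{flat-section-h-sln} produces exactly one new compatibility condition, and this single equation is what solves for one new coefficient of the $\hat\mu$'s (in the paper's notation $\mu_{\beta+1}^{(\alpha)}$, where $k=\alpha n+\beta$). If the coefficients of $\hat{t}_k(h)$ beyond $ht_k$ are also left as unknowns, the recursion is underdetermined and $\omega$ is not a map: every choice of higher order terms of $\hat{t}_k$ yields a different flat connection, in general with different monodromy. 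This is exactly the first problem the paper flags at the start of Section \ref{geometric-description}, and resolving it is Step 1 of the paper's proof: one passes by an explicit unipotent gauge from the Drinfeld--Sokolov form to the conformal gauge, in which the combinations $u_k$ are honest $(k,0)$-tensors for $k\ge 3$ while $u_2$ transforms as a complex projective structure; one then prescribes $u_k=f_k(h)t_k$ (Equation \eqref{higher-order-terms-det}) with formal reparametrizations $f_k(h)=h+\mathcal{O}(h^2)$, and for $\hat{t}_2$ one uses the complex structure induced by the point of $\T^n$ together with the Poincar\'e uniformization theorem to fix a canonical projective structure (Equation \eqref{higher-order-t2}). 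None of this appears in your proposal, and without it the ``formal family of flat $h$-connections'' you invoke is not well defined.

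The omission also undermines your descent argument, which you correctly identify as the main obstacle but do not resolve. You assert that higher diffeomorphisms act on $(t_k,\mu_k)$ ``exactly through residual gauges,'' but this identification is only automatic at lowest order, where the Poisson bracket \eqref{var-P-Q} is the semiclassical limit of the commutator in \eqref{var-hat-P} and \eqref{var-hat-Q}; at higher orders in $h$ the canonical prescription of the $\hat{t}_k$ must itself be shown to be equivariant, and the paper achieves this only because the free coefficients $f_k^{(l)}$ introduced in Step 1 can be adjusted order by order (assuming $t_k\neq 0$) to absorb the discrepancy. Without the $f_k$'s you have no parameters left to enforce equivariance, so the monodromy class could genuinely depend on the chosen representative of $[(t_k,\mu_k)]$. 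Finally, your injectivity argument (compose with the extraction of Theorem \ref{mainthm-1}) differs from the paper, which instead shows directly that any infinitesimal gauge preserving the Frobenius form arises from a Hamiltonian, whence $d\omega$ is injective; that substitution is reasonable in spirit, but it too silently presupposes the canonical, equivariant choice of higher order terms, so the fatal issue remains the unprescribed $\mathcal{O}(h^2)$ part of the $\hat{t}_k$.
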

This theorem gives a sort of inverse to Theorem \ref{mainthm-intro} since it allows to reconstruct the flat connection out of the extracted tensors. Notice that in this paper, we are not analyzing the convergence of this reconstruction. 

Using the map $\omega$, we conjecture the following in analogy with Hitchin's section:
\begin{fakeconj}
The restriction of $\omega$ to the zero-section $\T^n\subset \cotang$ has its image inside the real character variety $\Rep(\pi_1\S,\SL_n(\R))$.
\end{fakeconj}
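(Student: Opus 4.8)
The plan is to recast the conjecture as an equivariance property of the map $\omega$ under complex conjugation. On the target, entrywise conjugation of the monodromy defines an involution $c:\rho\mapsto\bar\rho$ on $\Rep(\pi_1\S,\SL_n(\C))$; a class is fixed by $c$ exactly when $\bar\rho=C\rho C^{-1}$ for some $C\in\GL_n(\C)$, and such a class is conjugate into $\SL_n(\R)$ precisely in the \emph{real} case $\bar C C=\id$, as opposed to the quaternionic case $\bar C C=-\id$. On the source I would introduce the fibrewise involution $j\colon(\mu,t)\mapsto(\mu,-t)$ of $\cotang$, whose fixed locus is exactly the zero-section $\T^n$. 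The conjecture then follows from the single intertwining identity
\begin{equation*}
c\circ\omega=\omega\circ j,
\end{equation*}
since its restriction to $\mathrm{Fix}(j)=\T^n$ reads $\overline{\omega(\mu,0)}=\omega(\mu,0)$, so that every point of $\omega(\T^n)$ is $c$-fixed.

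To prove $c\circ\omega=\omega\circ j$ I would argue at the level of the flat connections produced in the main construction. The first step is to track how the reduction to the standard form of step~(1) and the conditions $(\mc{C})$ transform under entrywise conjugation: the $t_k$ enter the $dz$-part of the connection while the $\mu_k$ enter the $d\bar z$-part, so conjugation exchanges holomorphic and anti-holomorphic directions, and I expect that this exchange, combined with $t_k\mapsto-t_k$, reproduces up to gauge the connection attached to $j(\mu,t)$. Concretely, I would build recursively, alongside the rational WKB data, a gauge transformation $g(h)$ with
\begin{equation*}
\overline{A(\mu,t,h)}=g(h)\,A(j(\mu,t),h)\,g(h)^{-1}+g(h)\,d\bigl(g(h)^{-1}\bigr),
\end{equation*}
as an identity of formal series in the fractional powers of $h$. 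The combinatorial core is to match the recursion defining $g(h)$ with the symmetry of $(\mc{C})$ under $\del\leftrightarrow\delbar$; on the zero-section the $t$-dependent terms drop and the recursion becomes a statement purely about the higher complex structure $\mu$.

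It then remains to identify the real form. Composing $c$ with the gauge $g$ yields an anti-linear parallel endomorphism $\sigma$ of the bundle $V$, and the conjecture holds only if $\sigma^2=+\id$, i.e. $\sigma$ is a genuine real structure, and only if the corresponding real form is the split one $\SL_n(\R)$ rather than some $\SU(p,q)$. I would read both off from the principal nilpotent, companion shape of the $dz$-part that survives at $t=0$: this principal $\SL_2$ inside $\SL_n$ is exactly the datum singling out the split real form in Hitchin's identification of his section, and I expect the same mechanism to fix the sign of $\sigma^2$ here.

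The main obstacle is twofold. First, the whole argument lives in the formal, rational-$h$ category: even granting $c\circ\omega=\omega\circ j$ as an identity of formal series, one must still argue that reality persists for the actual representation, and this cannot be separated from the convergence of $\omega$, which the paper deliberately leaves open. Second, and more seriously, fixing the sign $\sigma^2=\pm\id$ and the precise real form is the genuinely hard input. In Hitchin's setting it is supplied by the harmonic metric solving the self-duality equations; here no analytic substitute is available, so producing the correct real structure intrinsically from the higher complex structure --- rather than merely the weaker relation $\bar\rho\sim\rho$ --- is where I expect the real difficulty to concentrate.
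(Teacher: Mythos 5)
The statement you are trying to prove is not proved in the paper at all: it is stated as a conjecture, and the paper explicitly says that the link to Hitchin's component ``is still work in progress.'' So there is no proof of the paper to compare yours against, and your proposal has to stand on its own as a proof of an open problem --- which it does not. What you give is a strategy outline whose central claim, the intertwining identity $c\circ\omega=\omega\circ j$ with $j(\mu,t)=(\mu,-t)$, is precisely the kind of statement that would have to be established, and you offer no argument for it beyond the expectation that a gauge $g(h)$ can be constructed recursively; none of the rational WKB recursion is actually carried out. The mechanism you propose is the natural one --- it is the analogue of Hitchin's fixed-point argument for the involution $\Phi\mapsto-\Phi$ on Higgs bundles, and indeed the paper itself points in this direction when it asks for a gauge putting the connection in the form $\lambda\Phi+D_A+\lambda^{-1}\Phi^*$ --- but you concede yourself that the two decisive steps (showing $\sigma^2=+\id$ rather than $-\id$, i.e.\ ruling out the quaternionic case, and passing from a formal identity in fractional powers of $h$ to a statement about actual monodromy) are missing, and in Hitchin's setting these are exactly the steps supplied by the harmonic metric, for which no substitute exists here. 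A proof attempt that defers the genuinely hard input is a restatement of the conjecture, not a proof of it.

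There is also a concrete obstruction you overlooked, which the paper itself flags: it is not clear that $\omega$ is even defined on the zero-section. Step 3 of the construction of $\omega$ (Theorem \ref{mainthm-2}) determines the reparametrization coefficients $f_k^{(l)}$ by solving equations whose leading coefficient is $t_k$, hence requires $t_k\neq 0$; the paper notes after the theorem that the definition of $\omega$ may need to be modified at $t_k=0$, possibly by letting the condition $u_k=f_k(h)t_k$ involve the $\mu_k$. Your entire argument consists of restricting the putative identity $c\circ\omega=\omega\circ j$ to $\mathrm{Fix}(j)=\{t=0\}$ --- exactly the locus where the construction of $\omega$ degenerates. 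So before any equivariance argument can run, one must first extend the definition of $\omega$ across $t=0$ and do so compatibly with $j$, a problem your proposal never addresses. A minor further point: for an irreducible class fixed by $c$, Schur's lemma leaves only the dichotomy $\bar CC=\pm\id$ (real versus quaternionic); the form $\SU(p,q)$ you worry about is characterized by invariance under $\rho\mapsto\overline{(\rho^T)^{-1}}$, not under $c$, so it is not actually in the picture --- but this simplification does not help with the sign, which remains the core difficulty.
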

Admitting this conjecture, it is easy to show that the image is precisely Hitchin's component which would prove the conjecture about the equivalence between Hitchin's component and the moduli space of higher complex structures.

\vspace{0.5cm}
\textbf{Further research.}
The rational WKB method and the link between higher complex structures and flat connections open several links to related topics, which we shortly discuss here.

\vspace{0.3cm}
\textbf{\textit{Twistor description.}}
Theorems \ref{mainthm-intro} and \ref{thm-mainmap} are part of a larger, partially conjectural, picture. Hitchin's component carries a natural symplectic structure (Goldman's symplectic structure \cite{goldman1984symplectic}) and the moduli space of higher complex structures $\T^n$ carries a natural complex structure. Conjecturally both together yield a Kähler structure (admitting that both spaces are diffeomorphic). By a theorem of Feix--Kaledin \cite{feix2001hyperkahler, kaledin1997hyperkaehler}, this implies a \emph{hyperkähler structure} near the zero-section of $\cotang$. The \emph{twistor space} of this conjectural hyperkähler structure is quite similar to the twistor space of the moduli space of Higgs bundles which uses Deligne's $\lambda$-connections (see \cite[Section 4]{simpson1996hodge}). In this context, the map in \eqref{mainmap-intro} can be seen as an analog of the \emph{non-abelian Hodge correspondence}. 



\vspace{0.3cm}
\textbf{\textit{Spectral Networks.}}
The main technical point we completely omit in this paper is the question of convergence of the rational WKB method. For the usual WKB method, the work of Gaiotto--Moore--Neitzke \cite{gaiotto2013spectral} show the emergence of a combinatorial structure, \emph{spectral networks}, which is roughly speaking a labeled graph on the surface. Given an angle $\theta\in[0,2\pi)$, the WKB series converges (using the Borel summation) in a half plane (given by $\theta$) for every point of the surface apart from the spectral network (see also \cite[Section 2]{hollands2020exact}).

\begin{oq}
Is there a combinatorial structure, generalizing spectral networks, describing the convergence for the rational WKB method? 
\end{oq}

For trivial higher complex structure (all $\mu_k=0$), we recover opers for which the usual WKB method works. This is why we expect to get a generalization of spectral networks. 
In the usual setting for spectral networks, you can choose the complex structure on $\S$ and then you pick holomorphic differentials of degree 2 to $n$. Hence, the number of degrees of freedom is less than the dimension of the character variety $\Rep(\pi_1\S,\SL_n(\C))$ for $n>2$. In our case, we should get more degrees of freedom.


\vspace{0.3cm}
\textbf{\textit{Topological recursion.}}
The usual WKB method applied to the differential equation defined by an oper gives a solution through a recursive method. In \cite{bouchard2017reconstructing} and \cite{eynard2021quantization}, it is shown that an explicit solution to all orders is given by \emph{topological recursion}.
This method, introduced by Eynard--Orantin (e.g. in \cite{eynard2007invariants}), produces differential forms via a recursion on the Euler characteristic out of a spectral curve and some extra data.

\begin{oq}
Is possible to get explicit solutions to the rational WKB method using topological recursion?
\end{oq}

The spectral curve of topological recursion is a branched covering of a Riemann surface. For the Hitchin spectral curve one gets a direct link to opers \cite{dumitrescu2017interplay}. In our setting, we have no fixed complex structure. So the question is to adapt topological recursion to a non-holomorphic setting. In terms of differential equations, instead of having only one differential equation for a holomorphic function, we have a system of two equations (see \eqref{flat-section-h-sln} below).

\vspace{0.3cm}
\textbf{\textit{$W$-algebras.}}
The natural transformation algebra of $\mathfrak{sl}_n$-opers is called a \emph{$W_n$-algebra}. For $n=2$ this is the famous \emph{Virasoro algebra}. It can be defined to be the function space of differential operators in dimension 1 (on the circle, or a formal punctured disk). 

In our description, we have two sorts of transformations: higher diffeomorphisms and $W_n$-transformations. Starting at a point where all parameters $\mu_k$ and $t_k$ are zero, we can act by $W_n$-transformations to get non-zero differentials $t_k$, while staying within $\mu_k=0 \;\forall\, k$. Acting via higher diffeomorphisms gives the opposite. This clarifies a picture described in \cite[Section 4]{bilal1991origin}.

\vspace{0.3cm}
\begin{center}
\begin{tikzpicture}
\node[draw] (A) at (-6,0) {$(t=0, \mu \neq 0)$};
\node[draw] (B) at (0,0) {$(t=0,\mu=0)$};
\node[draw] (C) at (6,0) {$(t\neq 0, \mu=0)$};
\draw (-3,0.3) node {higher};
\draw (-3,-0.3) node {diffeomorphism};
\draw (3,0.3) node {$W_n$-transform};

\draw[->,>=latex] (B)--(A);
\draw[->,>=latex] (B)--(C);
\end{tikzpicture}
\end{center}

\vspace{0.3cm}
Another appearance of $W$-algebras comes when the surface $\S$ has boundary. Then the action of the gauge group on the space of all connections is not hamiltonian any more, due to boundary terms. These boundary terms should be a representation of the $W_n$-algebra.

\vspace{0.5cm}
\textbf{Notations.}
Throughout the paper, we denote by $\Sigma$ a smooth surface, closed, orientable and of genus at least 2. A reference complex coordinate system on $\S$ is denoted by $(z,\bar{z})$. We consider a complex bundle $V$ of rank $n$ and degree zero over $\S$.

\vspace{0.5cm}
\textbf{Acknowledgments.}
I warmly thank Vladimir Fock for many fruitful discussions and ideas. I'm also grateful to Ga\"etan Borot for helpful discussions.
I gratefully acknowledge support from the Max-Planck Institute for Mathematics in Bonn. 
\vspace{0.3cm}

\section{Flat connections in small rank}

The way we describe the space of flat $\SL_n(\C)$-connections is best illustrated by looking at the small cases for $n=2$ and $n=3$. All the difficulties arise there. The hurried reader may skip this part and directly go to the general case in Section \ref{general-case}.

\subsection{Getting started}

Consider a complex bundle $V$ of rank 2 and degree 0 over a surface $\S$. If $\S$ is a Riemann surface and $V$ a holomorphic bundle, there is the notion of an \emph{oper}.
Roughly, a $\mathfrak{sl}_2$-oper is a holomorphic connection (hence flat) locally of the form 
\begin{equation}\label{oper-sl2}
d+\begin{pmatrix} &t \\ 1& \end{pmatrix}.
\end{equation}
Empty entries in matrices are always filled with zeroes. Such a matrix is called \textit{Frobenius matrix}, or \textit{companion matrix}.

Start from a smooth $\SL_2(\C)$-connection $D$ which is flat. Locally we can write $D=d+A_1+A_2$, where $A_1$ denotes the $(1,0)$-part and $A_2$ the $(0,1)$-part. Then, we use a gauge transformation to transform $A_1$ into the Frobenius form \eqref{oper-sl2}. Such a gauge exists for a generic connection. After the gauge transformation, the connection takes the following form:
\begin{equation}\label{can-form-sl2}
d+\begin{pmatrix} &\hat{t} \\ 1& \end{pmatrix}+\begin{pmatrix} -\frac{1}{2}\del\hat{\mu}& -\frac{1}{2}\del^2\hat{\mu}+\hat{\mu}\hat{t}\\ \hat{\mu}& \frac{1}{2}\del\hat{\mu}\end{pmatrix}.
\end{equation}
Notice that there are two parameters, $\hat{t}$ and $\hat{\mu}$. To show how to get to Equation \eqref{can-form-sl2}, remark that putting $A_1$ into Frobenius form is equivalent to the existence of a basis of the space of sections $\Gamma(V)$ of the form $(v,\nabla v)$ where $v$ is a section and $\nabla=D(\partial)$ is the covariant derivative in the $z$-direction. The second matrix $A_2$ is given by the action of $\bar{\nabla}:= D(\delbar)$ on the basis $(v,\nabla v)$. The first column comes from $\bar\nabla v$, and the second column of $A_2$ can be obtained via $\bar{\nabla}\nabla v = \nabla \bar\nabla v$ using the flatness of the connection. Finally, since we work with traceless connections, we get an expression for the diagonal entries of $A_2$.

There is one more condition to ensure that the connection in \eqref{can-form-sl2} is flat. It is a compatibility condition between our two parameters:
\begin{equation}\label{flatness-sl2}
(-\delbar+\hat{\mu}\del+2\del\hat\mu)\hat{t}-\frac{1}{2}\del^3\hat\mu = 0.
\end{equation}

Another way to get this condition goes as follows. We look for flat sections $\Psi$. Since $A_1$ is in Frobenius form, we have $\Psi = \binom{-\del\psi}{\psi}$. The flatness gives two equations for $\psi(z,\bar{z})$:
\begin{equation}\label{flat-section-sl2}
\left \{ \begin{array}{cl}
(\del^2-\hat{t})\psi &= 0  \\
(-\delbar-\frac{1}{2}\del\hat{\mu}+\hat{\mu}\del)\psi &= 0.
\end{array} \right.
\end{equation}
Recall that a system of two differential equations $D_1\psi=0$ and $D_2\psi=0$ where $D_1$ and $D_2$ are linear differential operators is \emph{compatible} if $$[D_1,D_2] = 0 \mod \langle D_1, D_2 \rangle$$ where $\langle D_1, D_2\rangle$ is the left ideal generated by $D_1$ and $D_2$ in the space of differential operators. This is a way to formulate the Frobenius integrability condition. If that condition is not satisfied, it is possible to reduce the system to a smaller one. This compatibility condition is equivalent to the associated connection being flat, i.e. we get Equation \eqref{flatness-sl2}.

With what we have done, we could describe $\Rep(\pi_1\S,\SL_2(\C))$ as the space of $(\hat{\mu},\hat{t})$ satisfying Equation \eqref{flatness-sl2}. This is unsatisfying for several reasons:
\begin{itemize}
	\item The parameters $\hat\mu$ and $\hat{t}$ are not tensors. Their global nature on the surface is complicated.
	\item The condition from Equation \eqref{flatness-sl2} contains a higher order derivative $\frac{1}{2}\del^3\hat\mu$.
\end{itemize}
Indeed, considering a holomorphic coordinate change $z\mapsto w(z)$, the parameter $\hat{t}(z)$ transforms as
\begin{equation}\label{cp1-transfo}
\hat{t}(w) = \hat{t}(z)\left(\frac{dz}{dw}\right)^2+\frac{1}{2}S(w,z)
\end{equation}
where $S(w,z) = \frac{w'''}{w'}-\frac{3}{2}\left(\frac{w''}{w'}\right)^2$ denotes the Schwarzian derivative, where we have put $w'=\frac{dw}{dz}$. This transformation rule shows that $\hat{t}$ defines a \emph{complex projective structure}\footnote{A complex projective structure on a surface is an atlas where the charts are open sets of $\mathbb{C}P^1$ and the transition functions are Möbius transformations. Using the uniformization theorem, the space of complex projective structures can be identified with the cotangent bundle of Teichmüller space.} on $\S$ (see for example \cite[Section 2]{bilal1991origin}). 

There is a way to solve both of these problems at once: we introduce a formal parameter $h$ and consider a family of $h$-connections. This means that we consider
\begin{equation}\label{can-form-h-sl2}
h d+\begin{pmatrix} &\hat{t}(h) \\ 1& \end{pmatrix}+\begin{pmatrix} -\frac{1}{2}h\del\hat{\mu}& -\frac{1}{2}h^2\del^2\hat{\mu}+\hat{\mu}\hat{t}\\ \hat{\mu}(h)& \frac{1}{2}h\del\hat{\mu}\end{pmatrix}.
\end{equation}
Note that all the derivatives come with an $h$ and that our parameters $\hat{t}(h)$ and $\hat\mu(h)$ now depend on the parameter $h$. We think of $h$ as a deformation parameter which is ``small''. Taylor developing our parameters we get
\begin{equation}\label{t-mu}
\hat{t}(h) = t+\mathcal{O}(h) \;\; \text{ and }\;\; \hat\mu(h) = \mu+\mathcal{O}(h).
\end{equation}

The good news is that $t$ and $\mu$, the constant terms of $\hat{t}$ and $\hat\mu$, are tensors:
\begin{prop}
The parameters $t$ and $\mu$ are tensors of type $(2,0)$ and $(-1,1)$ respectively.
\end{prop}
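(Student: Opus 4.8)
The plan is to read off how the two parameters transform under a holomorphic change of the reference coordinate $z\mapsto w(z)$, and to check that the non-tensorial (anomalous) contributions carry strictly positive powers of $h$, so that they drop out of the constant terms $t$ and $\mu$ of \eqref{t-mu}. The transformation rules are most transparent through the two scalar equations satisfied by the flat section. Writing $\Psi=\binom{-h\del\psi}{\psi}$ and expanding $(hd+A)\Psi=0$ for the $h$-connection \eqref{can-form-h-sl2}, one obtains the $h$-analogues of \eqref{flat-section-sl2}: the holomorphic equation $(h^2\del^2-\hat t)\psi=0$ and the anti-holomorphic equation $(-\delbar+\hat\mu\del-\tfrac12\del\hat\mu)\psi=0$. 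Covariance of the second-order (holomorphic) operator forces $\psi$ to be a form of type $(-\tfrac12,0)$ (the standard $\mathfrak{sl}_2$-oper normalization); under $z\mapsto w$ its representatives are then related by $\psi_z=(w')^{-1/2}\psi_w$, where $w'=dw/dz$.

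First I would substitute this rule into the holomorphic equation. The composition of the two first-order covariant derivatives into $h^2\del^2$ produces, on top of the tensorial rescaling, a term built from the Schwarzian derivative, and requiring the equation to keep its shape fixes
\[ \hat t(w)=\hat t(z)\Big(\tfrac{dz}{dw}\Big)^2+\tfrac{h^2}{2}\,S(w,z). \]
This is exactly the projective-connection rule \eqref{cp1-transfo} at $h=1$, but the anomalous Schwarzian term now sits at order $h^2$, precisely because each of the two derivatives forming the scalar operator carries a factor $h$. Setting $h=0$ removes the anomaly and leaves $t(w)=t(z)(dz/dw)^2$, the transformation law of a quadratic differential, i.e. a tensor of type $(2,0)$.

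Next I would feed the same rule for $\psi$ into the anti-holomorphic equation. Only first derivatives of the transition function appear here, and a short computation shows that the two terms proportional to $w''$ — one coming from $\del\psi$, the other from $\del\hat\mu$ — cancel, so the equation retains its form with
\[ \hat\mu(w)=\hat\mu(z)\Big(\tfrac{dz}{dw}\Big)^{-1}\frac{d\bar z}{d\bar w}. \]
Hence $\hat\mu$ already transforms as a pure Beltrami differential, a tensor of type $(-1,1)$, with no anomaly at all; in particular its constant term $\mu$ does too. The compatibility of the overlapping charts is guaranteed by the globally defined flat section $\Psi$, so these local rules assemble into genuine global tensors.

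The delicate point, and the step I expect to require the most care, is to justify rigorously that the Frobenius normalization forces $\psi$ to be a $(-\tfrac12,0)$-form. Concretely this means verifying that the frame $(v,\nabla v)$ used to reach \eqref{can-form-h-sl2} is genuinely attached to the reference coordinate, so that a coordinate change must be compensated by the gauge transformation $\diag((w')^{-1/2},(w')^{1/2})$. The diagonal part of this gauge produces exactly the weights giving types $(2,0)$ and $(-1,1)$, whereas its derivative part — responsible for the Schwarzian in $\hat t$ and for the cancelling $w''$-terms in $\hat\mu$ — is what is controlled by the powers of $h$. Confirming that the anomalies are organized by $h$ in this way, and in particular that nothing survives into the constant terms, is the crux of the argument.
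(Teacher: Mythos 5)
Your proof is correct, and its treatment of $\mu$ follows a genuinely different route from the paper's. For $t$ the two arguments essentially coincide: the paper quotes the projective-structure rule \eqref{cp1-transfo}, inserts the factor $h^2$ carried by the Schwarzian, and reads off the $h^0$-level, while you re-derive that rule from covariance of the operator $h^2\del^2-\hat{t}$ acting on $\psi\in\Gamma(K^{-1/2})$ --- the same idea, made self-contained. For $\mu$ the paper is indirect: it matches weights inside the flatness constraint \eqref{flatness-sl2}, declaring $\delbar\hat{t}$ and $\del^3\hat\mu$ to be ``of the same kind'', which forces type $(-1,1)$ once $t$ has type $(2,0)$. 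You instead transform the second equation of \eqref{flat-section-sl2} directly and check that the $w''$-contributions recombine into $\del_w\hat\mu(w)$, so that $\hat\mu$ transforms as a Beltrami differential with no anomaly at any order of $h$. This yields a strictly stronger conclusion (the full series $\hat\mu(h)$, not only its constant term, is tensorial --- a feature special to rank two: already for $n=3$ the transformation of $\hat\mu_2$ acquires an anomalous term $h\,\hat\mu_3\,w''/\overline{w'}$, so there the anomaly is merely pushed to positive order in $h$, which is why the paper lumps $\hat\mu$ together with $\hat{t}$ as ``not tensors''), and it avoids the heuristic step of assigning weights to the individually non-tensorial terms of \eqref{flatness-sl2}. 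The price is that your whole argument hinges on the normalization $\psi\in\Gamma(K^{-1/2})$, which you rightly flag as the crux; it is justified either by demanding that the transformed second-order operator stay monic with no first-order term, or by the $\SL_2$-normalization $\det(v,\nabla v)=1$ of the Frobenius frame, and it is the $n=2$ case of the assertion in Section \ref{general-case} that $\psi$ is globally a section of $K^{-(n-1)/2}$. The paper's argument, in exchange, is shorter and requires no discussion of $\psi$ at all.
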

\begin{proof}
For $t$, take the transformation rule for $\hat{t}$ from Equation \eqref{cp1-transfo}, together with the formal parameter $h$, and consider the $h^0$-level. This gives $t(w) = t(z)\left(\frac{dz}{dw}\right)^2$ since the Schwarzian derivative contributes with a factor $h^2$.

For $\mu$, from Equation \eqref{flatness-sl2}, we see that $\delbar t$ and $\del^3\mu$ are of the same kind. Since $t$ is of type $(2,0)$, it follows that $\mu$ is of type $(-1,1)$.
\end{proof}

This solves the first of the problems listed above. The second problem, the higher order derivative term in the constraint \eqref{flatness-sl2}, gets solved by using a method for solving differential equations with formal parameter, the \emph{WKB method}.

\subsection{WKB method}

Named after G. Wentzel, H. Kramers and L. Brillouin, the WKB method was developed in 1926 for approximating solutions to differential equations. 
In the 1980s it was developed into an exact method (see for example \cite{voros1983return}). More recently it is used in the context of flat connections, for example in \cite{gaiotto2013spectral} or \cite{iwaki2014exact}.

It is a method to solve differential equations depending on $h$-derivatives on an unknown $\psi$ in the semiclassical limit (when $h \to 0$). The WKB method uses the following ansatz: 
\begin{equation}\label{WKB-ansatz}
\psi = \exp\left(\frac{1}{h}(s_0+h s_1+h s_2+...)\right)
\end{equation}
The best way to see how the method works is to see an example.

\begin{example}
Consider the following differential equation, known as the \emph{Schrödinger equation}:
\begin{equation}\label{Schroedinger}
(h^2\del^2-\hat{t}(h))\psi=0.
\end{equation}
Putting the ansatz $\psi=\exp\left(\frac{1}{h}s\right)$ with $s=s_0+h s_1+...$, we get 
\begin{equation}\label{Schroedinger-WKB}
(\del s)^2+h \del^2s - \hat{t}(h) = 0.
\end{equation}
Write $\hat{t}=t +\mathcal{O}(h)$, then the $h^0$-term gives 
$$(\del s_0)^2-t = 0.$$
So we get two solutions $\del s_0=\pm \sqrt{t}$. The important feature of the WKB ansatz is that knowing $\hat{t}(h)=\sum_i h^i t^{(i)}$ to all orders, we get all $s_i$ (for $i>0$) recursively from $s_0$. For example the $h$-term of \eqref{Schroedinger-WKB} gives $2\del s_0\del s_1+\del^2 s_0-t^{(1)}=0$. In general the equation for $s_i$ is $$2\del s_0\del s_i +\textstyle\sum_{k=1}^{i-1}\del s_k\del s_{i-k} +\del^2s_{i-1}-t^{(i)}=0.$$
In total, we get two fundamental solutions, one for each choice of $\del s_0$ which is in compliance with the fact that the Schrödinger equation is of second order. The integration constants for the $s_i$ can be absorbed by an overall factor (a formal power series in $h$) which scales the fundamental solutions.
\end{example}

An important question concerning the WKB ansatz is its convergence. In general, the formal sum in Equation \eqref{WKB-ansatz} never converges. Though, it is possible through a resummation method, the \emph{Borel summation}, to make sense of Equation \eqref{WKB-ansatz} as an asymptotic series \cite{voros1983return}. 

It turns out that the convergence at a point $P$ can only be ensured when the limiting path to $P$ stays in some half-plane, defined by an angle $\theta\in [0,2\pi)$. On a Riemann surface, this gives rise to a graph depending on $\theta$, called a \emph{spectral network}. We refer to the work of \cite{gaiotto2013spectral} and \cite{hollands2020exact} for details.

In this article, we completely put aside the question of convergence and work formally.

\subsection{WKB method for flat connections}\label{description-for-n2}

We wish to use the WKB method for our problem of parametrizing the character variety $\Rep(\pi_1\S,\SL_2(\C))$. We use the fact that \textit{a connection is flat iff it admits a basis of flat sections}. 

In Equation \eqref{flat-section-sl2}, we have seen the system of differential equations we have to solve to get a flat section. In the setting of $h$-connections, the system becomes
\begin{equation}\label{flat-section-h-sl2}
\left \{ \begin{array}{cl}
(h^2\del^2-\hat{t})\psi &= 0  \\
(-h\delbar-\frac{1}{2}h\del\hat{\mu}+\hat{\mu}h\del)\psi &= 0.
\end{array} \right.
\end{equation}
Note that the first equation is the Schrödinger equation from Equation \eqref{Schroedinger}.

Using the WKB ansatz $\psi=\exp\left(\frac{1}{h}s\right)$ with $s=s_0+h s_1+...$, we get
\begin{equation}\label{flat-section-sl2-WKB}
\left \{ \begin{array}{cl}
(\del s)^2+h \del^2 s-\hat{t} &= 0  \\
-\delbar s-\frac{1}{2}h\del\hat{\mu}+\hat{\mu}\del s &= 0.
\end{array} \right.
\end{equation}

The big difference to the example of the Schrödinger equation is that we have a system of two differential equations which gives compatibility conditions. The $h^0$-term gives
\begin{equation}\label{flat-section-sl2-WKB-s0}
\left \{ \begin{array}{cl}
(\del s_0)^2-t &= 0  \\
-\delbar s_0+\mu\del s_0 &= 0.
\end{array} \right.
\end{equation}
The compatibility condition for this system reads
\begin{equation}\label{cond-C-n2}
(-\delbar+\mu\del+2\del\mu)t=0.
\end{equation}

Notice the similarity, but also the differences between this equation and the constraint \eqref{flatness-sl2} from above. The quantities involved now are tensors, and we got ride of the higher order derivative term since it appears in a higher order of $h$.

The WKB strategy now tells us that under the condition \eqref{cond-C-n2}, we can solve for $s_0$, which gives us two solutions. Then we should be able to get all $s_i$ recursively. The problem is that we will get a compatibility condition for each $h^k$-term of the system \eqref{flat-section-sl2-WKB}. 

The way around this problem is to consider the higher order compatibility conditions as equations for the higher order terms of $\hat\mu$. To be more precise:
\begin{prop}
Knowing all higher order terms of $\hat{t}$ (with $t\neq 0$) and the constant term $\mu$ of $\hat\mu$ such that the compatibility condition \eqref{cond-C-n2} is satisfied, we can recursively solve for all $s_i$ in the WKB ansatz.
\end{prop}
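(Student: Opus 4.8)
The plan is to expand both equations of the WKB system \eqref{flat-section-sl2-WKB} in powers of $h$ and to exploit an asymmetry between them: the first (Schrödinger) equation governs $\del s_k$ and does not involve $\hat\mu$ at all, while the second equation governs $\delbar s_k$ and is the only place where the higher-order terms of $\hat\mu$ enter. Writing $\hat t=\sum_k h^k t^{(k)}$ with $t^{(0)}=t$ and $\hat\mu=\sum_k h^k\mu^{(k)}$ with $\mu^{(0)}=\mu$, I would set up an induction on $k$ in which, at step $k$, the data $s_0,\dots,s_{k-1}$ and $\mu^{(0)},\dots,\mu^{(k-1)}$ are already known, and I determine $s_k$ together with the next term $\mu^{(k)}$.

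First I would extract $\del s_k$ from the first equation. Its $h^k$-term reads
\begin{equation*}
2\del s_0\,\del s_k+\sum_{\substack{i+j=k\\ 1\le i,j\le k-1}}\del s_i\,\del s_j+\del^2 s_{k-1}-t^{(k)}=0,
\end{equation*}
and since $\del s_0=\pm\sqrt t$ is nonzero by the hypothesis $t\neq 0$, this determines $\del s_k$ purely algebraically from the lower-order data and the given coefficient $t^{(k)}$. This is exactly the usual WKB recursion of the Schrödinger example and uses nothing about $\hat\mu$. The $h^k$-term of the second equation is
\begin{equation*}
\delbar s_k=\mu\,\del s_k+\mu^{(k)}\del s_0+\sum_{j=1}^{k-1}\mu^{(j)}\del s_{k-j}-\tfrac12\del\mu^{(k-1)},
\end{equation*}
in which the only quantity not yet fixed is the top coefficient $\mu^{(k)}$.

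For $s_k$ to be a genuine function of $(z,\bar z)$ its two derivatives must agree under cross-differentiation, $\del\delbar s_k=\delbar\del s_k$. Since $\del s_k$ is already known, the right-hand side is known; substituting the formula for $\delbar s_k$ turns this identity into a single scalar equation for $\mu^{(k)}$ which, after collecting the two terms carrying $\mu^{(k)}$, takes the directly integrable form
\begin{equation*}
\del\!\left(\del s_0\cdot\mu^{(k)}\right)=(\text{known lower-order quantity}).
\end{equation*}
Because $\del s_0=\pm\sqrt t$ is invertible, this is solved for $\mu^{(k)}$ by a single integration in $z$. Once $\mu^{(k)}$ is fixed, $\delbar s_k$ is determined by the second equation, and $s_k$ is recovered by integrating $\del s_k$ and $\delbar s_k$, the integration constants being absorbed by an overall scaling factor as explained after \eqref{Schroedinger-WKB}. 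The base case $k=0$ is precisely the system \eqref{flat-section-sl2-WKB-s0}, whose compatibility is the assumed condition \eqref{cond-C-n2}, so the induction starts.

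The main obstacle is conceptual rather than computational. At each order one produces a fresh compatibility constraint, and read naively these would over-determine the $s_i$ and cause the recursion to break down, which is exactly the failure one encounters if one insists on knowing $\hat\mu$ in advance. The key idea is to reinterpret the order-$k$ constraint not as a condition on the phases but as the \emph{defining equation} for the unknown higher-order term $\mu^{(k)}$; equivalently, these are the successive $h$-orders of the full flatness condition \eqref{flatness-sl2}. The technical heart is then the solvability of that equation at every order, which is guaranteed by the non-vanishing of $\del s_0$, i.e.\ by the hypothesis $t\neq 0$. This is also the point where one should remain careful about the integration constants and, beyond the purely formal setting, about the multivaluedness of $\sqrt t$ near the zeros of $t$; consistent with the rest of the paper I would treat these issues formally, exactly as in the scalar Schrödinger example.
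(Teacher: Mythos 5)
Your proof is correct and takes essentially the same approach as the paper: expand the system \eqref{flat-section-sl2-WKB} order by order in $h$, let the first equation determine $\del s_k$ (using $\del s_0=\pm\sqrt{t}\neq 0$), and reinterpret the order-$k$ compatibility condition as the defining equation for the new unknown $\mu^{(k)}$, solvable precisely because its coefficient is $\del s_0$. Your write-up merely makes explicit the integrable form $\del\bigl(\del s_0\cdot\mu^{(k)}\bigr)=(\text{known lower-order quantity})$ of that equation, which the paper's terser argument leaves implicit.
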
 
\begin{proof}
Put $\hat{t}(h)=\textstyle\sum_i t^{(i)}h^i$ and $\hat\mu(h) = \textstyle\sum_i \mu^{(i)}h^i$. The system \eqref{flat-section-sl2-WKB} for $s_i$ ($i>0$) reads
\begin{equation}
\left \{ \begin{array}{cl}
2\del s_i \del s_0+\textstyle\sum_{k=1}^{i-1}\del s_k \del s_{i-k}+\del^2 s_{i-1}-t^{(i)} &= 0  \\
-\delbar s_i-\frac{1}{2}\del\mu^{(i-1)}+\textstyle\sum_{k=0}^i \mu^{(k)}\del s_{i-k} &= 0.
\end{array} \right.
\end{equation}
If $t^{(i)}$ is known for all $i$, then we have one new variable $\mu^{(i)}$ which appears in the compatibility condition. The factor of $\mu^{(i)}$ is $\del s_0$ which has to be non-zero. This means that the constant term of $\hat{t}$ has to be non-zero.
\end{proof}

Another good news is that we can actually give $\hat{t}$ to all orders: the $\hat{t}$ as a global object on $\S$ defines a complex projective structure. The tensor $\mu$ is of type $(-1,1)$, hence can be interpreted as Beltrami differential (for a generic flat connection we will have $\mu\bar{\mu}\neq 1$). Hence we get a complex structure on $\S$. Then, there is a preferred complex projective structure, coming from the Poincaré uniformization theorem. We take $\hat{t}$ to be this structure, depending only on a holomorphic quadratic differential $t$. 

The last important point is that the compatibility condition $(-\delbar+\mu\del+2\del\mu)t=0$ is precisely the statement that \emph{$t$ is holomorphic with respect to the complex structure defined by the Beltrami differential $\mu$}.

Finally, notice that we have not used the full gauge freedom when we decided to put $A_1$ into Frobenius form. This means that we can associate different pairs $(t,\mu)$ to the same flat connection. It turns out that all these pairs are equivalent under diffeomorphisms of $\S$ isotopic to the identity (for the general case, see step 3 in the proof of Theorem \ref{mainthm-2}).

Therefore, we get a parametrization of an open subset of $\Rep(\pi_1\S,\SL_2(\C))$ which has a geometric meaning: a couple $(\mu,t)$ of a Beltrami differential and a holomorphic quadratic differential modulo isotopies. \emph{This is precisely the cotangent bundle of Teichmüller space!}

More precisely, the work of Gallo--Kapovich--Marden \cite{gallo2000monodromy} shows that an open dense subset of the $\SL_2(\C)$-character variety is described as the monodromy of a complex projective structure. 

\subsection{Case for \texorpdfstring{$n=3$}{n=3}}

We use the same strategy as for the case $n=2$. We start with a family of flat $h$-connections of the form $hd+A_1(h)+A_2(h)$. We use the gauge freedom to put $A_1$ into Frobenius form. This is possible for a generic connection. We then get a connection of the form
\begin{equation}\label{can-form-sl3}
hd+\begin{pmatrix} && \hat{t}_3(h)\\ 1&&\hat{t}_2(h)\\ &1& \end{pmatrix}+\begin{pmatrix} \hat\mu_1(h) &*&* \\ \hat\mu_2(h)&*&*\\ \hat\mu_3(h)& *&* \end{pmatrix}
\end{equation}
where the stars are explicit expressions in the parameters $(\hat{t}_2,\hat{t}_3,\hat\mu_2,\hat\mu_3)$. Also $\hat\mu_1$ can be expressed by the parameters since the second matrix has trace zero: 
\begin{equation}\label{mu1-n3}
\hat\mu_1=-\frac{2}{3}\hat\mu_3 \hat{t}_2-h\del\hat\mu_2-\frac{2}{3}h^2\del^2\hat\mu_3.
\end{equation}

The flatness of the connection gives two compatibility conditions among these parameters. These conditions contain higher order derivatives and quadratic terms in $\hat{t}_2$. 

Denote by $t_2,t_3, \mu_2$ and $\mu_3$ the constant terms of the parameters. As before, these quantities are tensors: $t_k$ is of type $(k,0)$ and $\mu_k$ is of type $(1-k,1)$ for $k\in\{2,3\}$ (see Proposition \ref{prop-nature-t-mu} below).

To describe flat sections, recall that we act on the right: $\Psi^T(d+A_1+A_2)=0$. We get $\Psi^T=(\psi \; \scalebox{0.75}[1.0]{\( - \)}\del\psi \; \del^2\psi)$. The system of differential equations for $\psi$ reads
\begin{equation}\label{flat-section-h-sl3}
\left \{ \begin{array}{cl}
(h^3\del^3-\hat{t}_2h\del+\hat{t}_3)\psi &= 0  \\
(-h\delbar-\hat{\mu}_1+\hat{\mu}_2h\del-\hat{\mu}_3h^2\del^2)\psi &= 0.
\end{array} \right.
\end{equation}

Using the WKB ansatz we get
\begin{equation}\label{flat-section-h-sl3-WKB}
\left \{ \begin{array}{cl}
(\del s)^3+3h (\del s)\del^2s+h^2\del^3s-\hat{t}_2\del s+\hat{t}_3 &= 0  \\
-\delbar s-\hat{\mu}_1+\hat{\mu}_2\del s-\hat\mu_3((\del s)^2+h\del^2s) &= 0.
\end{array} \right.
\end{equation}

The $h^0$-term, using Equation \eqref{mu1-n3} for $\hat\mu_1$, gives the system 
\begin{equation}
\left \{ \begin{array}{cl}
(\del s_0)^3-t_2\del s_0+t_3 &= 0  \\
-\delbar s_0+\frac{2}{3}\mu_3t_2+\mu_2\del s_0-\mu_3(\del s_0)^2 &= 0.
\end{array} \right.
\end{equation}
The compatibility condition of the system is
\begin{equation}\label{n3-wrong}
(\delbar-\mu_2\del-3\del\mu_2)t_3 - \frac{2}{3}t_2\del(\mu_3t_2) + \del s_0((-\delbar+\mu_2\del+2\del\mu_2)t_2+2\mu_3\del t_3+3\del\mu_3 t_3) =0.
\end{equation}

There are two problems here: 
\begin{itemize}
	\item We wish to have two separate conditions, one for $t_2$ and one for $t_3$, in analogy with the two constraint from the flatness. There is no reason why the two terms in Equation \eqref{n3-wrong} are separately zero.
	\item We wish to get rid of the quadratic term $\frac{2}{3}t_2\del(\mu_3t_2)$.
\end{itemize}

To solve the second problem, the solution is to declare that $\hat{t}_k$ has no constant term and starts only at order 1:
\begin{equation}\label{condition-on-t}
\hat{t}_k=h t_k +\mathcal{O}(h^2) \text{ for } k\in \{2,3\}.
\end{equation}

This shift pushes higher order terms in the $\hat{t}_k$ into higher orders in $h$. So it solves the second problem, but an even bigger problem arises with this constraint on $\hat{t}_k$: the usual WKB method does not work any more.

This can already be seen for $n=2$. If we assume $\hat{t}=h t+\mathcal{O}(h^2)$, the system \eqref{flat-section-h-sl2} gives first that $s_0$ is a constant, and then we get $t=0$. This is impossible since $t$ should be a free parameter.

The solution to this problem is to use a modified version of WKB, which we call \textbf{rational WKB}. We keep the same ansatz $\psi=\exp\left(\frac{1}{h}s\right)$ but now the function $s$ depends on a \emph{fractional} exponent of $h$. More precisely, for $\SL_n(\C)$, we have
$$s=s_0+h^{1/n}s_{1/n}+h^{2/n}s_{2/n}+...$$
We will see that this solves both problems: for $n=3$, we will get two compatibility equations, without the quadratic term in $t_2$. Even for $n=2$, this new viewpoint slightly changes the computation. 

\begin{example}
The equations describing a flat section using the modified WKB ansatz $\psi=\exp\left(\frac{1}{h}s\right)$ are still given by Equation \eqref{flat-section-sl2-WKB}.
Now, the function $s$ depends on $h^{1/2}$ and $\hat{t}=ht+\mathcal{O}(h^2)$. The $h^0$-term gives $\del s_0=0$ and $\delbar s_0+\mu \del s_0=0$. Hence $s_0$ is a constant. Then, we get two equations for $s_{1/2}$:
\begin{equation*}
\left \{ \begin{array}{cl}
(\del s_{1/2})^2-t &= 0  \\
-\delbar s_{1/2}+\mu\del s_{1/2} &= 0.
\end{array} \right.
\end{equation*}
The compatibility condition of this system is precisely Equation \eqref{cond-C-n2}.
All other compatibility conditions can be seen as defining equations for the higher order terms of $\hat\mu$.
\end{example}

\begin{Remark}
For $n=2$ and $\mu=0$, \cite[Lemma 3]{fock2008cosh} gives all higher order terms for $\hat{t}$ and $\hat\mu$. The result is $$\hat{t}(h)=ht+h^2((\del\varphi)^2-\del^2\varphi) \;\text{ and }\; \hat\mu(h)=-h\bar{t}e^{-2\varphi}$$ where $e^\varphi$ is the metric and satisfies the cosh-Gordon equation $$\frac{1}{2}\del\delbar\varphi=e^\varphi+t\bar{t}e^{-\varphi}.$$
\end{Remark}

Let us return to $n=3$ where we have the system \eqref{flat-section-h-sl3-WKB}. From $\hat{t}_k=ht_k+\mathcal{O}(h^2)$, we get that $s_0$ is constant. The equations for $s_{1/3}$ are given by
\begin{equation*}
\left \{ \begin{array}{cl}
(\del s_{1/3})^3+t_3 &= 0  \\
-\delbar s_{1/3}+\mu_2\del s_{1/3} &= 0.
\end{array} \right.
\end{equation*} 
The compatibility equation for this system reads 
\begin{equation}\label{cond-C-n3-1}
(-\delbar+\mu_2\del+3\del\mu_2)t_3=0.
\end{equation}

The equations for $s_{2/3}$ are given by 
\begin{equation*}
\left \{ \begin{array}{cl}
3(\del s_{1/3})^2\del s_{2/3}-t_2 \del s_{1/3} &= 0  \\
-\delbar s_{2/3}+\mu_2\del s_{2/3}-\mu_3 (\del s_{1/3})^2 &= 0
\end{array} \right.
\end{equation*} 
leading to the compatibility condition
\begin{equation}\label{cond-C-n3-2}
(-\delbar+\mu_2\del+2\del\mu_2)t_2+3\del\mu_3t_3+2\mu_3\del t_3=0.
\end{equation}

The two constraints \eqref{cond-C-n3-1} and \eqref{cond-C-n3-2} have no higher derivative terms and are linear in the $\mu$'s and $t$'s (and their derivatives). Further, we get exactly the constraints from Equation \eqref{n3-wrong}, but separately and without the quadratic term.

Finally, the most important advantage of this description of $\Rep(\pi_1\S,\SL_3(\C))$ is that it allows a \emph{geometric interpretation}. Conditions \eqref{cond-C-n3-1} and \eqref{cond-C-n3-2} are precisely the compatibility conditions appearing in the \emph{cotangent bundle of the moduli space of higher complex structures of order 3}. This generalizes our observation for $n=2$ where we found the cotangent bundle of Teichmüller space. This geometric interpretation is discussed in detail in Section \ref{geometric-description}.

\section{Rational WKB and flat \texorpdfstring{$\SL_n(\C)$}{SL(n,C)}-connections}\label{general-case}

In this section we generalize the construction of the previous section to arbitrary $n$. In particular we will associate tensors to flat connections of an open subset of the character variety $\Rep(\pi_1\S,\SL_n(\C))$. The geometric meaning of this description is discussed in Section \ref{geometric-description}.

We start with a smooth flat $\SL_n(\C)$-connection $D$ in a complex bundle $V$ of degree zero over $\S$. We fix a reference complex structure on $\S$ which allows to use complex coordinates. Further, we consider a family of $h$-connections $D_h$ such that $D_1=D$, where $h$ denotes a formal parameter. We write $D_h$ in a local chart as $hd+A_1+A_2$, where $A_1$ denotes the $(1,0)$-part and $A_2$ the $(0,1)$-part of the connection form. Further, we put $\nabla = D_h(\partial)$ and $\bar\nabla= D_h(\bar{\partial})$, the covariant derivatives in the $z$ and $\bar{z}$-direction. 

Using a gauge transform, we can assume $A_1$ in Frobenius form (a companion matrix). The local existence of such a gauge for a generic connection can be found in \cite[Proposition B.1]{thomas2020higher}. 
In global terms, we assume the existence of a filtration $\mathcal{F}$ of $V$ such that $$\nabla:\mc{F}_i/\mc{F}_{i-1} \rightarrow \mc{F}_{i+1}/\mc{F}_i$$
is an isomorphism for all $i$. This is equivalent to the existence of a global section $v$ such that $B=(v,\nabla v,\nabla^2v,...,\nabla^{n-1}v)$ forms a basis of the space of sections $\Gamma(V)$.
The subset of connections satisfying this condition is clearly open (since $\S$ is compact) and non-emtpy (since opers are inside).

Locally the connection $D_h$ can be written as follows:
\begin{equation}\label{can-form-general}
D_h = hd+\begin{pmatrix} &&& \hat{t}_n(h)\\ 1&&&\vdots\\ & \ddots &&\hat{t}_2(h) \\ &&1&0 \end{pmatrix}+\begin{pmatrix} \hat\mu_1(h) &*&* & * \\ \hat\mu_2(h) &*&* & * \\ \vdots &*&*&*\\ \hat\mu_n(h)& *&* &* \end{pmatrix}.
\end{equation}
Empty entries denote zeroes and the stars are entries which are explicit expressions in the other parameters. 
The fact that the second matrix is trace-free gives an explicit formula for $\hat\mu_1$. Hence, the connection $D_h$ is described by $(\hat{t}_k(h), \hat\mu_k(h))_{2\leq k \leq n}$.

Let us give an explanation why all entries are determined from the $\hat{t}_k$ and $\hat\mu_k$. Putting the first matrix into Frobenius form is equivalent to consider the basis $B$ above. Then we can express $\nabla^nv$ and $\bar\nabla v$ in this basis:
\begin{equation}\label{eqqq1}
\left \{ \begin{array}{cl}
\nabla^n v &= \hat{t}_{n}v + \hat{t}_{n-1} \nabla v+...+\hat{t}_{2}\nabla^{n-2}v \\
\bar{\nabla}v &= \hat{\mu}_1 v + \hat{\mu}_2 \nabla v + ... + \hat{\mu}_n \nabla^{n-1}v.
\end{array} \right.
\end{equation}
Since $D_h$ is flat, we have $\bar\nabla\nabla^k v = \nabla^k\bar\nabla v$, from which we can compute $A_2$ using Equation \eqref{eqqq1}.

The flatness of $D_h$ gives $n-1$ compatibility conditions between the parameters $(\hat{t}_k(h), \hat\mu_k(h))$. Indeed it is clear that the curvature of a connection of the form \eqref{can-form-general} has rank 1. These constraints are complicated expressions containing higher order derivatives and higher order terms in the $\hat{t}_k$. Our procedure will simplify these constraints.

To assure that $D_h$ is flat, it is sufficient to assure the existence of a basis of flat sections. Recall that we act on the right: $\Psi^T(d+A_1+A_2)=0$. Since the first matrix is in Frobenius form, the components of $\Psi$ are given by $\Psi_k=(-1)^k\del^k\psi$. Hence, a flat section is uniquely described by a local function $\psi(z,\bar{z})$, which globally is section of $K^{-(n-1)/2}$. The flatness is then reduced to the following system of differential equations:
\begin{equation}\label{flat-section-h-sln}
\left \{ \begin{array}{cl}
(h^n\del^n-\hat{t}_2h^{n-2}\del^{n-2}+\hat{t}_3h^{n-3}\del^{n-3}-...+(-1)^{n-1}\hat{t}_n)\psi &= 0  \\
(-h\delbar-\hat\mu_1+\hat\mu_2h\del-\hat\mu_3h^2\del^2+...+(-1)^{n}\hat\mu_n\del^{n-1})\psi &= 0.
\end{array} \right.
\end{equation}
The compatibility conditions of this system give exactly the $n-1$ compatibility conditions between the $\hat{t}_k$ and the $\hat\mu_k$.

To avoid higher order terms in the $\hat{t}_k$ in the compatibility conditions, we consider a family of $h$-connections such that $\frac{d\hat{t}_k}{dh}\mid_{h=0}=0$ for all $k$. Then the Taylor development of the parameters can be written as follows:
\begin{equation}\label{cond-on-parameters}
\hat{t}_k(h) = ht_k+\mathcal{O}(h^2) \;\; \text{ and }\;\; \hat\mu_k(h) = \mu_k+\mathcal{O}(h).
\end{equation}

The parameters $t_k$ and $\mu_k$ play an important role in the sequel. Their first property is that they transform like tensors:
\begin{prop}\label{prop-nature-t-mu}
The parameters $t_k$ and $\mu_k$ are tensors of type $(k,0)$ and $(1-k,1)$ respectively.
\end{prop}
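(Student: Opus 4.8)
The plan is to determine the tensorial type of each $t_k$ and $\mu_k$ by tracking how they transform under a holomorphic change of the reference complex coordinate $z \mapsto w(z)$, working at the level of the leading ($h^0$) coefficients isolated in \eqref{cond-on-parameters}. The key organizing principle is that the defining relations \eqref{eqqq1} express $\nabla^n v$ and $\bar\nabla v$ in the basis $B = (v, \nabla v, \dots, \nabla^{n-1}v)$, so I would first record how the basis elements $\nabla^{k}v$ transform. Since $\nabla = D_h(\partial)$ carries a factor of $h$ and $\partial_z = (dw/dz)\,\partial_w$, each application of $\nabla$ contributes a factor $dw/dz$ at leading order in $h$, while the connection-form corrections (and any terms arising from $\partial(dw/dz)$) are pushed into higher powers of $h$ because every derivative in \eqref{can-form-general} comes with an $h$. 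Thus, to lowest order, $\nabla^k v$ scales like $(dw/dz)^k$ times $v$, which is exactly the statement that the Frobenius basis transforms by the diagonal $\diag(1, \tfrac{dw}{dz}, \dots, (\tfrac{dw}{dz})^{n-1})$ up to $\mathcal O(h)$.

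Next I would read off the transformation of $t_k$ from the first line of \eqref{eqqq1}. Comparing the coefficient $\hat t_k$ of $\nabla^{n-k} v$ in the expansion of $\nabla^n v$ across the two coordinates, the leading-order balance forces $\hat t_k$ to scale by $(dw/dz)^{n}/(dw/dz)^{n-k} = (dw/dz)^{k}$, i.e. $t_k(w) = t_k(z)\,(dz/dw)^{k}$, which is the $(k,0)$ type. This generalizes the $n=2$ computation already carried out after \eqref{cp1-transfo}: the Schwarzian and all lower-weight inhomogeneous correction terms are anomalies that appear only at order $h^2$ or higher (since they involve extra $z$-derivatives of the transition data), so they drop out of the $h^0$ coefficient $t_k$ and leave a purely homogeneous tensorial transformation. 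I would state this cleanly as a lemma about the leading symbol of the operator $h^n\partial^n - \sum \hat t_k h^{n-k}\partial^{n-k}$ being a well-defined section of a line bundle, from which the type of each $t_k$ follows.

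For the $\mu_k$ I would argue from the second line of \eqref{eqqq1}: $\bar\nabla v = \sum_k \hat\mu_k \nabla^{k-1} v$. The left side $\bar\nabla v = D_h(\bar\partial)v$ picks up a factor $d\bar w/d\bar z$ at leading order, while on the right $\nabla^{k-1}v$ contributes $(dw/dz)^{k-1}$; balancing gives $\mu_k(w) = \mu_k(z)\,(d\bar w/d\bar z)(dz/dw)^{k-1}$, which is precisely type $(1-k,1)$. Alternatively — and this is the cross-check I would emphasize — one can deduce the type of $\mu_k$ directly from conditions $(\mathcal C)$ in \eqref{condition-C} (equivalently \eqref{cond-C-n3-1}, \eqref{cond-C-n3-2} for $n=3$): in each term the operator $\bar\partial$ raises the $(0,1)$-weight by one and $\partial$ raises the $(1,0)$-weight by one, so demanding that every summand be a tensor of the same type as $\bar\partial t_k$ (namely $(k,1)$) forces $\mu_{l+2}$ to have type $(-l-1,1)$, i.e. $\mu_k$ has type $(1-k,1)$.

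I expect the main obstacle to be the bookkeeping that justifies discarding the inhomogeneous (anomaly) terms: one must verify carefully that every correction to the naive tensorial transformation of $\nabla^k v$ and of the $\hat t_k$ — the terms built from derivatives of $dw/dz$, analogous to the Schwarzian — carries at least one extra power of $h$, so that it cannot contaminate the $h^0$ coefficient. This is where the ansatz \eqref{cond-on-parameters}, in particular $\hat t_k = h t_k + \mathcal O(h^2)$, does real work, and I would isolate it as the crux of the argument rather than treat it as routine.
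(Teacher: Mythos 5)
Your proposal is correct in substance and is essentially the paper's own argument in different clothing: the paper transforms the scalar differential operators $h^n\del^n-\hat t_2h^{n-2}\del^{n-2}+\dots$ and $-h\delbar-\hat\mu_1+\hat\mu_2h\del-\dots$ under $z\mapsto w(z)$, while you transform the Frobenius basis $(v,\nabla v,\dots,\nabla^{n-1}v)$ and the relations \eqref{eqqq1}; these are the same computation, and both hinge on the two points you isolate, namely that every derivative carries an $h$ (so all anomalous terms built from derivatives of $\frac{dw}{dz}$, Schwarzian-type terms included, are pushed to higher order in $h$) and that the ansatz $\hat t_k=ht_k+\mathcal{O}(h^2)$ protects the $h^1$-coefficient. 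Your cross-check for $\mu_k$ via conditions $(\mathcal{C})$ is also exactly the argument the paper uses for $\mu$ in the $n=2$ case.

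One slip to fix: your displayed transformation law for $\mu_k$ has both Jacobian factors inverted. The balancing you describe actually yields
\begin{equation*}
\hat\mu_k^{(w)}=\hat\mu_k^{(z)}\left(\frac{dw}{dz}\right)^{k-1}\frac{d\bar z}{d\bar w}+\mathcal{O}(h),
\end{equation*}
not $\mu_k(z)\,\frac{d\bar w}{d\bar z}\left(\frac{dz}{dw}\right)^{k-1}$: starting from $\bar\nabla_{\bar z}v=\sum_k\hat\mu_k^{(z)}\nabla_z^{k-1}v$ and substituting $\bar\nabla_{\bar z}=\frac{d\bar w}{d\bar z}\bar\nabla_{\bar w}$ and $\nabla_z^{k-1}=\left(\frac{dw}{dz}\right)^{k-1}\nabla_w^{k-1}+\mathcal{O}(h)$, the factor $\frac{d\bar w}{d\bar z}$ must be moved to the right-hand side as $\frac{d\bar z}{d\bar w}$. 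Under the same convention that makes $t_k(w)=t_k(z)\left(\frac{dz}{dw}\right)^k$ a tensor of type $(k,0)$, i.e. a section of $K^k$, it is the corrected expression (and not the one you wrote, which would be of type $(k-1,-1)$) that is of type $(1-k,1)$, matching $\mu_k\in\Gamma(K^{1-k}\otimes\bar K)$ and, for $k=2$, the standard Beltrami transformation law. This is a bookkeeping error rather than a gap: the formula your own argument produces has precisely the claimed type.
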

\begin{proof}
To determine the global nature of $t_k$, consider the transformation of the differential operator 
$$h^n\del_z^n-\hat{t}_2h^{n-2}\del_z^{n-2}+\hat{t}_3h^{n-3}\del_z^{n-3}-...+(-1)^{n-1}\hat{t}_n$$
under a holomorphic coordinate change $z\mapsto w(z)$.
We have $\del_z \mapsto \frac{dw}{dz}\del_w$. Since we consider the lowest order terms, those in which the $t_k$ appear, we see that the only contribution from $\hat{t}_{n-k}h^k\left(\frac{dw}{dz}\del_w\right)^k$ is $t_{n-k}h^{k+1}\left(\frac{dw}{dz}\right)^k\del_w^k$. In the same vain, the highest derivative term contributes $h^n\left(\frac{dw}{dz}\right)^n\del_w^n$. To get the standard form (dominant coefficient 1), we have to divide by $\left(\frac{dw}{dz}\right)^n$. Hence we get $$t_k(w) = t_k(z)\left(\frac{dz}{dw}\right)^k.$$
The same analysis works for $\mu_k$ by considering the transformation of the differential operator $$-h\delbar-\hat\mu_1+\hat\mu_2h\del-\hat\mu_3h^2\del^2+...+(-1)^{n}\hat\mu_n\del^{n-1}.$$
\end{proof}

Our strategy to solve the system \eqref{flat-section-h-sln} is to use the modified version of the WKB method, what we call \textit{rational WKB}. This allows to solve the system for low orders in $h$, and then to complete recursively to a solution. The ansatz is
\begin{equation}\label{rational-WKB-ansatz}
\psi=\exp\left(\frac{1}{h}s(z,\bar{z},h^{1/n})\right) \;\; \text{ with }\;\; s=s_0+h^{1/n}s_{1/n}+h^{2/n}s_{2/n}+...
\end{equation}
Using this ansatz for our system \eqref{flat-section-h-sln}, we get the following description for the character variety:

\begin{thm}\label{mainthm-1}
For an open subset of $\Rep(\pi_1\S,\SL_n(\C))$, we can associate to a flat connection a collection of tensors $(t_k, \mu_k)$, where $t_k$ is of type $(k,0)$ and $\mu_k$ of type $(1-k,1)$, satisfying for all $k$:
\begin{equation}\label{condition-C-2}
(-\bar{\partial}\!+\!\mu_2\partial\!+\!k\partial\mu_2)t_{k}+\sum_{l=1}^{n-k}((l\!+\!k)\partial\mu_{l+2}+(l\!+\!1)\mu_{l+2}\partial)t_{k+l}=0 ~.
\end{equation}
\end{thm}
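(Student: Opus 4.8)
The plan is to substitute the rational WKB ansatz \eqref{rational-WKB-ansatz} into the scalar system \eqref{flat-section-h-sln}, divide both equations by $\psi$, and extract the compatibility conditions order by order in $\epsilon:=h^{1/n}$. Writing $p=\partial s$ and $q=\bar\partial s$, the first line becomes a degree-$n$ polynomial relation in $p$ with coefficients the $\hat t_k$, while the second expresses $q$ as a degree-$(n-1)$ polynomial in $p$ with coefficients the $\hat\mu_k$; in both, the genuine $h\partial$-terms produce only corrections of high order in $\epsilon$. Before anything else I would nail down the two preliminary facts that the earlier examples use implicitly: that $\hat t_k=ht_k+\mathcal{O}(h^2)$ forces $p=\mathcal{O}(\epsilon)$, hence $\partial s_0=0$, and that the trace-free normalization of $A_2$ makes the constant term of $\hat\mu_1$ vanish, giving $\bar\partial s_0=0$ as well, so that $s_0$ is constant and both $p,q=\mathcal{O}(\epsilon)$. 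The tensors $(t_k,\mu_k)$ are the constant terms of the gauge-fixed parameters, which are globally defined on the open (nonempty) locus where $A_1$ admits Frobenius form, and which transform correctly by Proposition \ref{prop-nature-t-mu}.

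Next I would rescale $p=\epsilon\xi$ and divide the first equation by $h$, turning it into the spectral relation $\mathcal{P}(\xi)=\xi^n+\sum_{k=2}^n(-1)^{k-1}\epsilon^{n-k}t_k\xi^{n-k}=0$ (the dropped corrections begin at order $\epsilon^{n-1}$), whose $\epsilon^0$ limit $\xi^n=(-1)^n t_n$ is the $n$-sheeted spectral curve accounting for the $n$ independent flat sections. Solving this order by order determines $\partial s_{j/n}$; the second equation, being linear in $s$, then determines $\bar\partial s_{j/n}$ in terms of the previously found data and of $\mu_2,\dots,\mu_{j+1}$. The conditions $(\mathcal{C})$ arise as the integrability constraints $\bar\partial(\partial s_{j/n})=\partial(\bar\partial s_{j/n})$: for $j=1,\dots,n-1$ these yield \eqref{condition-C-2} for $k=n,n-1,\dots,2$ respectively, exactly reproducing for $n=3$ the conditions \eqref{cond-C-n3-1}–\eqref{cond-C-n3-2}. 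I would then record that all of these live at orders $\le\epsilon^{n-1}$, strictly below where the $h\partial$-corrections and the higher Taylor terms of $\hat t_k,\hat\mu_k$ enter, so those corrections cannot pollute $(\mathcal{C})$; the higher-order compatibilities are instead absorbed into the higher Taylor coefficients of $\hat\mu_k$.

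To obtain the precise coefficients $(l+k)$ and $(l+1)$ uniformly in $k$, rather than grinding each order in isolation, I would package the integrability into a single polynomial identity. With $\mathcal{Q}(\xi)=\sum_{j=1}^{n-1}(-1)^{j+1}\epsilon^j\mu_{j+1}\xi^j$, eliminating $\partial\xi$ and $\bar\partial\xi$ between $\mathcal{P}(\xi)=0$ and $q=\mathcal{Q}(\xi)$ and imposing $\bar\partial p=\partial q$ gives
\[
\mathcal{P}'(\xi)\,\partial\mathcal{Q}(\xi)-\mathcal{Q}'(\xi)\,\partial\mathcal{P}(\xi)+\epsilon\,\bar\partial\mathcal{P}(\xi)\equiv 0 \pmod{\mathcal{P}(\xi)},
\]
where $'$ denotes $\partial/\partial\xi$ and $\partial,\bar\partial$ differentiate only the tensor coefficients; the congruence holds because the left-hand side vanishes at all $n$ (generically distinct) roots of $\mathcal{P}$. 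Both sides are tensors of type $(n,1)$, with $\xi$ of type $(1,0)$, so the remainder modulo $\mathcal{P}$ has the form $\sum_{r=0}^{n-1}C_r\xi^r$ with $C_r$ of type $(n-r,1)$. The content of the theorem is then that the coefficient of $\xi^{n-k}$ is exactly the left-hand side of $(\mathcal{C})$ for that $k$ (the coefficient of $\xi^{n-1}$ being vacuous since $t_1=0$); the weights fall out naturally, with $(l+1)\mu_{l+2}\partial$ coming from $\mathcal{Q}'$ differentiating the $\xi^{l+1}$-term and $(l+k)\partial\mu_{l+2}$ from $\mathcal{P}'$ paired with $\partial\mathcal{Q}$ after reduction.

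The main obstacle is precisely this last reduction: carrying out the division modulo $\mathcal{P}$ and verifying that the several contributions to the coefficient of $\xi^{n-k}$ combine into exactly $(\mathcal{C}_k)$. Concretely, reducing monomials $\xi^{n+m}$ back below degree $n$ repeatedly invokes $\xi^n=(-1)^n t_n+\cdots$, mixing in the lower $t_{k'}$, so one must check that the resulting sum of degree factors collapses to the stated weights. I expect this to reduce to an identity among Newton-type coefficients for power sums of the roots, which one can establish by induction on $k$ anchored at $k=n$, where the sum in \eqref{condition-C-2} is empty and the identity is simply $\bar\partial$ and $\partial$ applied to $\xi^n=(-1)^n t_n$ (the computation already carried out for $(\mathcal{C})$ with $k=n$). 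The explicit $n=3$ case provides the cross-check that the bookkeeping is correct.
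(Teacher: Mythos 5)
Your reduction follows the same route as the paper, and it is carried out correctly up to the final step. The preliminaries ($\partial s_0=\bar\partial s_0=0$, tensoriality via Proposition \ref{prop-nature-t-mu}, genericity of distinct roots) are fine, and your truncation keeps exactly the orders where the genuine $h\partial$-corrections and the higher Taylor coefficients of $\hat t_k,\hat\mu_k$ cannot yet enter. Your polynomial congruence
\[
\mathcal{P}'(\xi)\,\partial\mathcal{Q}(\xi)-\mathcal{Q}'(\xi)\,\partial\mathcal{P}(\xi)+\epsilon\,\bar\partial\mathcal{P}(\xi)\equiv 0 \pmod{\mathcal{P}(\xi)}
\]
is, after undoing the rescaling $p=\epsilon\xi$, precisely the vanishing of the Poisson bracket of the two defining polynomials modulo the ideal they generate and modulo $h^2$, i.e.\ the statement that the surface they cut out in $T^{*\C}\S$ is Lagrangian to this order. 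This is exactly the paper's key geometric observation (Hamilton--Jacobi equation, generating function, Lagrangian spectral curve), which you have rediscovered in coordinates.

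The gap is the step you yourself flag as ``the main obstacle'': verifying that the remainder of this congruence modulo $\mathcal{P}$ has, as coefficient of $\xi^{n-k}$, exactly the left-hand side of \eqref{condition-C-2} with the weights $(l+k)$ and $(l+1)$. Those weights are the entire content of the theorem, and your proposal only sketches their verification (an induction anchored at the trivial case $k=n$ plus unspecified ``Newton-type'' identities); the induction step, where reducing $\xi^{n+m}$ modulo $\mathcal{P}$ mixes the lower $t_{k'}$ into each coefficient, is never performed. Note that the paper does not perform this computation either: it closes the argument by invoking Proposition \ref{spectralcurveprop} (Proposition 5 of \cite{Fock-Thomas}), which states that the bracket vanishes modulo $I$ and modulo $t^2$ if and only if conditions $(\mathcal{C})$ hold. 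You could repair your proof in one line by citing it, but you would then also owe the sign bookkeeping the paper does: your $\mathcal{P},\mathcal{Q}$ carry alternating signs relative to the convention $P=t_2p^{n-2}+\dots+t_n$, $Q=\mu_1+\mu_2p+\dots+\mu_np^{n-1}$ of that proposition, so the bracket condition a priori yields conditions $(\mathcal{C})$ for $((-1)^{k-1}t_k,(-1)^k\mu_k)$, and one must check that each such condition is just $(-1)^{k-1}$ times condition $(\mathcal{C}_k)$ for $(t_k,\mu_k)$, hence equivalent. As written, with neither the citation nor the completed induction, the coefficients in \eqref{condition-C-2} remain unverified and the proof is incomplete.
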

These constraints are called ``conditions $(\mathcal{C})$''. They have a geometric meaning in the theory of higher complex structures (see Theorems \ref{conditionC} and \ref{mainthm-2} below).

The idea of the proof is the following: the compatibility conditions of the lowest equations (gradation by exponents of $h$) of system \eqref{flat-section-h-sln}, using the rational WKB ansatz, give a Hamilton--Jacobi equation. This equation describes a generating function for a Lagrangian submanifold in $T^{\C}\S$. The main technical tool is to realize conditions $(\mathcal{C})$ as the condition of being Lagrangian (see Proposition \ref{spectralcurveprop} below).

\begin{proof}
Given a flat connection in the form \eqref{can-form-general}, we associate the parameters $t_k$ and $\mu_k$ from Equation \eqref{cond-on-parameters}. These are tensors by Proposition \ref{prop-nature-t-mu}. To get the conditions $(\mathcal{C})$, consider the system of differential equations \eqref{flat-section-h-sln} with the rational WKB method. We claim that the compatibility conditions for $s_{k/n}$ where $1\leq k <n$ are precisely conditions $(\mathcal{C})$. Those for $k\geq n$ involve higher order terms of $\hat{t}_k$ or $\hat{\mu}_k$ and hence can be considered as an equation on these higher order terms.

The first appearance of $s_{k/n}$ in the first equation is for the $h^{(k+n-1)/n}$-term, and in the second equation for the $h^{k/n}$-term.
Since we consider $s_{k/n}$ for $k<n$, we can neglect all terms $h^q$ with $q\geq 2$ for the first equation and $q\geq 1$ for the second equation. In particular, we can neglect the higher order terms in $$h^k\del^k\psi = (\del s)^k \psi + \text{ higher order terms}.$$ Hence the system \eqref{flat-section-h-sln} with rational WKB ansatz simplifies to
\begin{equation}\label{flat-section-simplified}
\left \{ \begin{array}{cl}
(\del s)^n-ht_2(\del s)^{n-2}+ht_3(\del s)^{n-3}-...+(-1)^{n-1}ht_n &= 0  \\
-\delbar s+\mu_2 \del s-\mu_3(\del s)^2+...+(-1)^{n}\mu_n(\del s)^{n-1} &= 0.
\end{array} \right.
\end{equation}
Now we should restrict to the system of $s_{k/n}$ and compute the compatibility condition. It turns out that we can get all the compatibility conditions at once, using the grading by $h^{1/n}$.

The crucial point is that the system \eqref{flat-section-simplified} is a Hamilton--Jacobi equation. Put $P(x) = x^n-ht_2x^{n-2}\pm...+(-1)^{n-1}ht_n$ and $Q(x,y) = -y+\mu_2 x-\mu_3x^2\pm...+(-1)^n\mu_n x^{n-1}$. The zero-set of these two polynomials is a surface $L\subset\C^2$ (we treat $x$ and $y$ as complex coordinates). We obtain system \eqref{flat-section-simplified} by $x=\del s$ and $y=\delbar s$. This means that $s$ is a generating function for $L$ which has to be Lagrangian modulo $h^2$. In other words: the compatibility condition of system \eqref{flat-section-simplified} is equivalent to $L$ being Lagrangian modulo $h^2$.

Now, a surface defined by two polynomials $P$ and $Q$ is Lagrangian iff the Poisson bracket $\{P,Q\}$ vanishes modulo the ideal generated by $P$ and $Q$. Using Proposition \ref{spectralcurveprop} below, we see that this Poisson bracket vanishes iff conditions $(\mathcal{C})$ are satisfied for $((-1)^{k-1}t_k,(-1)^k\mu_k)_{2\leq k\leq n}$. Finally, we check explicitly in conditions $(\mathcal{C})$ that apart from an overall sign, these are conditions $(\mc{C})$ for $(t_k,\mu_k)_{2\leq k\leq n}$. 
\end{proof}

To describe the geometric meaning of Theorem \ref{mainthm-1}, we have to consider higher complex structures.

\section{Higher complex structures}\label{higher-complex}

Higher complex structures are geometric structures on surfaces generalizing the complex structure. They are motivated by the idea to describe components of real character varieties, such as Hitchin components, as moduli space of some geometric structure. They were introduced in \cite{Fock-Thomas} and developed in \cite{thomas2020higher, thomas2020generalized}.

This section gives a summary of \cite{Fock-Thomas}, in particular the construction of higher complex structures, their moduli space and the cotangent bundle to its moduli space. 

\subsection{Introduction and definitions}

Before defining higher complex structures, let us describe complex structures on surfaces. A \emph{complex structure} is an atlas with charts in $\C$ and holomorphic transition functions. For a surface, this is equivalent to the existence of an endomorphism $J(z)$ of $T^*_z\S$ with $J(z)^2=-\id$ varying smoothly with $z\in\S$. The operator $J$ allows to consider $T^*_z\S$ as a complex vector space.

We wish to diagonalize $J$. To do so, we have to complexify the cotangent bundle. Since $J$ is a real operator, the eigendirections are mutually complex conjugated. Hence given a direction in $T^{*\C}_z\S$ at each point $z$, we can reconstruct $J$. 
In summary: a complex structure is \emph{entirely encoded in a section of $\bb{P}(T^{*\C}\S)$}.

Using coordinates, this description gives a tensor, the \textbf{Beltrami differential}. Given a reference complex structure on $\S$, it induces coordinates $(\del,\delbar)$ on $T^{\C}\S$. The dual of the eigendirection corresponding to the eigenvalue $i$ of $J$ is then described by $\delbar-\mu\del$ where $\mu$ is the Beltrami differential, a tensor of type $(-1,1)$.

The idea of higher complex structures is to replace the linear direction by a polynomial direction, or more precisely a $n$-jet of a curve inside $T^{*\C}\S$.
To get a precise definition, we use the \textbf{punctual Hilbert scheme} of the plane, denoted by $\Hilb^n(\C^2)$ which is defined by
$$\Hilb^n(\C^2)=\{I \text{ ideal of } \C[x,y] \mid \dim \C[x,y]/I = n\}.$$
A generic point in $\Hilb^n(\C^2)$ is an ideal whose algebraic variety is a collection of $n$ distinct points in $\C^2$. A generic ideal can be written as 
\begin{equation}\label{ideal-Hilb}
I=\langle-x^n+t_1x^{n-1}+...+t_n, -y+\mu_1+\mu_2x+...+\mu_nx^{n-1} \rangle.
\end{equation}
Moving around in $\Hilb^n(\C^2)$ corresponds to a movement of $n$ particles in $\C^2$. But whenever $k$ particles collide the Hilbert scheme retains an extra information: the $(k-1)$-jet of the curve along which the points entered into collision. The \textbf{zero-fiber}, denoted by $\Hilb^n_0(\C^2)$, consists of those ideals whose support is the origin. A generic point in $\Hilb^n_0(\C^2)$ is of the form 
$$\langle x^n, -y+\mu_2x+\mu_3x^2+...+\mu_nx^{n-1}\rangle$$
which can be interpreted as a $(n-1)$-jet of a curve at the origin.

We can now give the definition of the higher complex structure:
\begin{definition}[Def.2 in \cite{Fock-Thomas}]
 A \textbf{higher complex structure} of order $n$ on a surface $\Sigma$, in short \textbf{$n$-complex structure}, is a section $I$ of $\Hilb^n_0(T^{*\mathbb{C}}\Sigma)$ such that at each point $z\in \S$ we have $I(z)+\bar{I}(z)=\langle p, \bar{p} \rangle$, the maximal ideal supported at the origin of $T_z^{*\mathbb{C}}\Sigma$.
\end{definition}
Notice that we apply the punctual Hilbert scheme pointwise, giving a Hilbert scheme bundle over $\S$.
The condition on $I+\bar{I}$ ensures that $I$ is a generic ideal, so locally it can be written as 
\begin{equation}\label{I-expr}
I(z,\bar{z})=\langle p^n, -\bar{p}+\mu_2(z, \bar{z})p+\mu_3(z, \bar{z}) p^2...+\mu_n(z, \bar{z})p^{n-1}\rangle
\end{equation}
where $(p,\bar{p})$ are linear coordinates on $T^{*\C}\S$ induced by $(z,\bar{z})$.
The coefficients $\mu_k$ are called \textbf{higher Beltrami differentials}. A direct computation gives $\mu_k \in \Gamma(K^{1-k}\otimes \bar{K})$. The coefficient $\mu_2$ is the usual Beltrami differential. In particular for $n=2$ we get the usual complex structure.



\medskip
\noindent To define a finite-dimensional moduli space of higher complex structures, we have to define an equivalence relation. It turns out that the good notion is the following:
\begin{definition}[Def.3 in \cite{Fock-Thomas}]
A \textbf{higher diffeomorphism} of a surface $\Sigma$ is a hamiltonian diffeomorphism of $T^*\Sigma$ preserving the zero-section $\Sigma \subset T^*\Sigma$ setwise. The group of higher diffeomorphisms is denoted by $\Symp_0(T^*\Sigma)$.
\end{definition}
Symplectomorphisms act on $T^{*\C}\S$, so also on 1-forms. This is roughly how higher diffeomorphisms act on the $n$-complex structure, considered as the limit of an $n$-tuple of 1-forms. 

We then consider higher complex structures modulo higher diffeomorphisms, i.e. two structures are equivalent if one can be obtained by the other by applying a higher diffeomorphism.
Locally, any two $n$-complex structures are locally equivalent (see \cite[Theorem 1]{Fock-Thomas}).

\medskip
\noindent We define the \textbf{moduli space of higher complex structures}, denoted by $\T^n$, as the space of $n$-complex structures modulo higher diffeomorphisms.
The main properties are given in the following theorem:
\begin{thm}[Theorem 2 in \cite{Fock-Thomas}]\label{mainresultncomplex}
For a surface $\Sigma$ of genus $g\geq 2$ the moduli space $\T^n$ is a contractible manifold of complex dimension $(n^2-1)(g-1)$. 

In addition, there is a forgetful map $\mathcal{T}^n \rightarrow \mathcal{T}^{n-1}$ and a copy of Teichmüller space $\mc{T}^2\rightarrow \T^n$.
Along the copy of Teichmüller space, its cotangent space is given by the Hitchin base, i.e. for $\mu\in \mc{T}^2$ we have 
$$T^*_{\mu}\mathcal{T}^n = \bigoplus_{m=2}^{n} H^0(\Sigma,K^m).$$
\end{thm}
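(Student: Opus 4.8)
The plan is to realize $\T^n$ as the quotient of the infinite-dimensional space $\mc{J}^n$ of $n$-complex structures by the group $\Symp_0(T^*\S)$ of higher diffeomorphisms, and to control this quotient through its linearization at a well-chosen base point. I would take the base point to lie on the embedded Teichmüller space and choose the reference complex structure on $\S$ to be the one it determines, so that in adapted coordinates all higher Beltrami differentials vanish, $\mu_2 = \cdots = \mu_n = 0$. The formal tangent space to $\T^n$ is then the cohomology of the two-term complex
$$ \Lie(\Symp_0(T^*\S)) \xrightarrow{\ d\ } T_\mu\mc{J}^n . $$
Here $T_\mu\mc{J}^n = \bigoplus_{k=2}^n \Gamma(K^{1-k}\otimes\bar K)$ records the variations $\delta\mu_k$, and the Lie algebra of higher diffeomorphisms is generated by Hamiltonians on $T^*\S$ that vanish on the zero-section, i.e. by $\epsilon = \sum_{k=2}^n \epsilon_k\,p^{k-1}$ with $\epsilon_k\in\Gamma(K^{1-k})$. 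Computing the Poisson action on the ideal $I=\langle p^n,\,-\bar p + \sum_k \mu_k p^{k-1}\rangle$ at $\mu=0$ gives $\delta\mu_k = \delbar\epsilon_k + (\text{lower-order terms})$, so $d$ is a first-order elliptic operator with the same symbol as $\bigoplus_k\delbar$ acting on $K^{1-k}$.

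From this the algebraic claims follow quickly. The kernel of $d$ is $\bigoplus_{k=2}^n H^0(\S,K^{1-k})=0$ for $g\geq 2$, so the action is infinitesimally free; the cokernel is $\bigoplus_{k=2}^n H^1(\S,K^{1-k})$, which is therefore the tangent space $T_{[\mu]}\T^n$. Serre duality gives $H^1(\S,K^{1-k})^*\cong H^0(\S,K^k)$, and since $K^{1-k}$ has negative degree one has $h^1(K^{1-k})=(2k-1)(g-1)$; using $\sum_{k=2}^n(2k-1)=n^2-1$ yields complex dimension $(n^2-1)(g-1)$. Dualizing the same identification gives the cotangent space along the Teichmüller locus, $T^*_\mu\T^n = \bigoplus_{m=2}^n H^0(\S,K^m)$, the Hitchin base. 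The forgetful map $\T^n\to\T^{n-1}$ is induced by truncating the ideal (dropping the top jet $\mu_n$), and the embedding $\T^2\to\T^n$ by setting $\mu_3=\cdots=\mu_n=0$; on tangent spaces these are the evident projection and inclusion of the direct sum.

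For contractibility I would induct on $n$ using the forgetful fibration. The fibre of $\T^n\to\T^{n-1}$ over a fixed $(n-1)$-structure is the top-order datum $\mu_n$ modulo the residual gauge that acts only on it, which by the computation above is the vector space $H^1(\S,K^{1-n})$; hence the map is a fibration with contractible (affine) fibres. The base case $\T^2$ is Teichmüller space, a cell, so $\T^n$ is contractible.

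The hard part is the analytic foundation beneath all of this: showing the quotient is an honest smooth finite-dimensional manifold rather than merely having finite-dimensional formal tangent spaces. This requires a slice theorem for the action of the infinite-dimensional group $\Symp_0(T^*\S)$, resting on the ellipticity of $d$ established above (which supplies the Fredholm and transversality input) together with the vanishing of infinitesimal stabilizers just noted. The second delicate point is upgrading the forgetful map to a genuine fibre bundle with vector-space fibres, i.e.\ globalizing the local splitting used in the linearization; here I would exploit the local equivalence of any two $n$-complex structures (Theorem~1 of \cite{Fock-Thomas}) to build local trivializations and patch them.
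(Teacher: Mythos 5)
The theorem you were asked to prove is not proved in this paper at all: it is quoted verbatim from \cite{Fock-Thomas} (Theorem 2 there), so the only meaningful comparison is with the proof in that reference. Your proposal follows essentially the same route as Fock--Thomas: linearize the $\Symp_0(T^*\S)$-action at a point of the Teichm\"uller locus to get the complex $\bigoplus_{k=2}^n\Gamma(K^{1-k})\xrightarrow{\ \delbar\ }\bigoplus_{k=2}^n\Gamma(K^{1-k}\otimes\bar K)$, use the vanishing of $H^0(\S,K^{1-k})$ together with Serre duality and Riemann--Roch to get infinitesimal freeness, the dimension count $\sum_{k=2}^n(2k-1)(g-1)=(n^2-1)(g-1)$ and the identification $T^*_\mu\T^n=\bigoplus_{m=2}^n H^0(\S,K^m)$, and establish contractibility by induction along the forgetful maps, whose fibres are the vector spaces $H^{0,1}(K^{1-n})$, down to the base case $\T^2=$ Teichm\"uller space. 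The two analytic points you explicitly flag as gaps --- a slice theorem for the infinite-dimensional group action and the globalization of the fibration structure --- are precisely the content carried out in \cite{Fock-Thomas}, so your sketch is a faithful reconstruction rather than an alternative argument.
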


The forgetful map in coordinates is just given by forgetting the last Beltrami differential $\mu_n$. The copy of Teichmüller space is given by $\mu_3=...=\mu_n=0$ (this relation is unchanged under higher diffeomorphisms).

We notice the similarity to Hitchin's component, especially the contractibility, the dimension and the copy of Teichmüller space inside.

\subsection{Cotangent bundle and spectral curve}\label{cotang-spectralcurve}

The main actor in this paper is the cotangent bundle of the moduli space of higher complex structures. It can be described as follows:

\begin{thm}[Theorem 3 in \cite{Fock-Thomas}]\label{conditionC}
The cotangent bundle to the moduli space of $n$-complex structures $\cotang$ is given by 
\begin{align*}
\Big\{& (\mu_2, ..., \mu_n, t_2,...,t_n) \mid  \mu_k \in \Gamma(K^{1-k}\otimes \bar{K}), t_k \in \Gamma(K^k) \text{ and } \; \forall k\\
& (-\bar{\partial}\!+\!\mu_2\partial\!+\!k\partial\mu_2)t_{k}+\sum_{l=1}^{n-k}((l\!+\!k)\partial\mu_{l+2}+(l\!+\!1)\mu_{l+2}\partial)t_{k+l}=0 \Big\} \Big/\Symp_0(T^*\S).
\end{align*}
\end{thm}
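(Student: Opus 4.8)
The plan is to read the statement as the standard description of the cotangent bundle of a quotient via symplectic reduction. Write $\He^n$ for the infinite-dimensional space of $n$-complex structures, so that by definition $\T^n=\He^n/\Symp_0(T^*\S)$. Locally $\He^n$ is an affine space whose coordinates are the higher Beltrami differentials $(\mu_2,\dots,\mu_n)$ with $\mu_k\in\Gamma(K^{1-k}\otimes\bar K)$, namely the coefficients of the second generator of the ideal $I$ in \eqref{I-expr}. The bilinear pairing
\begin{equation*}
\big\langle(\mu_k),(t_k)\big\rangle=\sum_{k=2}^{n}\int_\S t_k\,\mu_k,
\end{equation*}
in which each summand is a $(1,1)$-form because $t_k\in\Gamma(K^k)$ and $\mu_k\in\Gamma(K^{1-k}\otimes\bar K)$, identifies the fiber of $T^*\He^n$ over $(\mu_k)$ with the collection $(t_2,\dots,t_n)$, $t_k\in\Gamma(K^k)$. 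Thus $T^*\He^n$ is parametrized exactly by the unconstrained pairs $(\mu_k,t_k)$ of the statement.

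First I would invoke the general fact that, for a group $G$ acting on a manifold $M$, the cotangent lift to $T^*M$ is Hamiltonian with equivariant moment map $J$ whose value on a covector $\alpha\in T^*_qM$ and a Lie algebra element $\xi$ equals $\alpha(\xi_M(q))$, the pairing of $\alpha$ with the infinitesimal generator $\xi_M$; moreover $T^*(M/G)\cong J^{-1}(0)/G$. Applied to $G=\Symp_0(T^*\S)$ and $M=\He^n$, and since the quotient by $\Symp_0(T^*\S)$ already appears on both sides, this reduces the theorem to identifying $J^{-1}(0)$ with the locus cut out by conditions $(\mc C)$; equivalently, $J^{-1}(0)$ is the set of $(t_k)$ annihilating every tangent direction to a higher-diffeomorphism orbit.

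The core computation is therefore the infinitesimal action of higher diffeomorphisms on the $\mu_k$. An infinitesimal higher diffeomorphism is the Hamiltonian vector field of a function $\epsilon$ on $T^*\S$ vanishing on the zero-section, which on the $n$-jet data reduces to a polynomial $\epsilon=\sum_j\epsilon_j(z,\bar z)\,p^j$ in the fiber coordinate $p$. Its effect on $I$ is read off by Poisson-bracketing the generators with $\epsilon$ modulo $I$: reducing $\{-\bar p+\sum_k\mu_k p^{k-1},\ \epsilon\}$ modulo $\langle p^n\rangle$ and the defining normal form extracts the variations $\delta_\epsilon\mu_k$, so that
\begin{equation*}
J(\mu,t)(\epsilon)=\sum_{k=2}^{n}\int_\S t_k\,\delta_\epsilon\mu_k.
\end{equation*}
Setting $J=0$, integrating by parts and collecting the coefficient of each independent component $\epsilon_j$ — the reality constraint $I+\bar I=\langle p,\bar p\rangle$ dictating which components are free — produces $n-1$ equations, one for each $k\in\{2,\dots,n\}$, and it remains only to match them term by term with conditions $(\mc C)$.

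The hard part will be this last computation: extracting $\delta_\epsilon\mu_k$ explicitly and integrating by parts so that the output is precisely the twisted operator $-\delbar+\mu_2\del+k\del\mu_2$ together with the exact weights $(l+k)$ and $(l+1)$ of the sum in $(\mc C)$. The delicacy lies in tracking how a monomial $\mu_k p^{k-1}$ mixes under the bracket with $\epsilon_j p^j$ and how the reduction to normal form redistributes the higher Beltrami differentials; here the cited Proposition \ref{spectralcurveprop}, which characterizes conditions $(\mc C)$ as the vanishing of the relevant Poisson bracket modulo the ideal, furnishes an independent check that the coefficients assemble correctly.
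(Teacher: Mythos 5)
You should note first that the paper under review contains no proof of this statement: Theorem~\ref{conditionC} is quoted verbatim as Theorem 3 of \cite{Fock-Thomas}, so the only comparison available is with the proof in that reference. Your strategy --- write $\T^n=\He^n/\Symp_0(T^*\S)$ where $\He^n$ is the space of all $n$-complex structures, identify $T^*\He^n$ with the unconstrained pairs $(\mu_k,t_k)$ via the pairing $\sum_k\int_\S t_k\,\mu_k$, and then realize $\cotang$ as the zero level of the moment map of the cotangent lift, so that conditions $(\mc{C})$ become the statement that $(t_k)$ annihilates the tangent directions of the $\Symp_0(T^*\S)$-orbits --- is the natural one and is, in substance, how the cited reference proceeds. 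The details you do give are sound: the pairing lands in $(1,1)$-forms, hence is well defined; Hamiltonians preserving the zero-section reduce modulo the ideal $I$ to polynomials $\sum_{j\geq 1}\epsilon_j p^j$ of degree at most $n-1$; and the count of free components $\epsilon_1,\dots,\epsilon_{n-1}$ matches the $n-1$ equations of $(\mc{C})$.

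However, as a proof your proposal has a genuine gap, which you yourself flag: the entire content of the theorem is the identification of the zero-momentum locus with conditions $(\mc{C})$, i.e.\ the computation of $\delta_\epsilon\mu_k$ by bracketing $-\bar p+\sum_k\mu_kp^{k-1}$ with $\epsilon$ modulo $I$, the integration by parts, and the verification that the coefficient of each $\epsilon_j$ is exactly $(-\delbar+\mu_2\del+k\del\mu_2)t_k+\sum_l((l+k)\del\mu_{l+2}+(l+1)\mu_{l+2}\del)t_{k+l}$. Without that step you have established only that $\cotang$ is cut out by \emph{some} $n-1$ linear first-order equations in the $t_k$, not that these equations are $(\mc{C})$ with those precise weights. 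Moreover, the fallback you offer, Proposition~\ref{spectralcurveprop}, cannot close the gap: its statement, that $\{-p^n+P,-\bar p+Q\}=0 \mod I \mod t^2$ if and only if $I\in\cotang$, already presupposes the description of $\cotang$ you are trying to prove, so invoking it here is circular; its non-circular content --- that the bracket vanishes iff conditions $(\mc{C})$ hold --- is precisely the computation you are omitting. To have a complete proof you must carry out that computation explicitly (or cite it from \cite{Fock-Thomas}, which is all the paper itself does).
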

We recognize conditions $(\mc{C})$ from Equation \eqref{condition-C-2}. We can consider it as the statement that the $t_k$ are ``holomorphic'' with respect to the higher complex structure given by the $\mu_k$. Note that we have to take equivalence classes under the action of higher diffeomorphisms.


\medskip
Finally, Let us describe the construction of the \emph{spectral curve} associated to a point in $\cotang$. It is a submanifold of $T^{*\C}\S$ which is a branched cover over $\S$.

Define polynomials $P(p)=t_2p^{n-2}+...+t_n$ and $Q(p)=\mu_1+\mu_2 p+...+\mu_n p^{n-1}$ where $\mu_1$ is given by: $\mu_1=-\sum_{k=2}^{n-1} \frac{k}{n}t_k \mu_{k+1} \mod t^2$. Put $I= \left\langle -p^n+P(p), -\bar{p}+Q(p) \right\rangle$. Define the \textbf{spectral curve} $\tilde{\Sigma} \subset T^{*\mathbb{C}}\Sigma$ by the equations $p^n=P$ and $\bar{p}=Q$. It is a branched covering with $n$ sheets.

\begin{prop}[Proposition 5 in \cite{Fock-Thomas}]\label{spectralcurveprop}
We have $\{ -p^n+P,-\bar{p}+Q\} = 0 \mod I \mod t^2$ iff $I \in T^*\mathcal{T}^n$.
\end{prop}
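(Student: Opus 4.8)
The plan is to reduce the statement to a coefficient-matching identity. By Theorem~\ref{conditionC}, the membership $I\in T^*\T^n$ means exactly that the tuple $(t_2,\dots,t_n,\mu_2,\dots,\mu_n)$ satisfies conditions $(\mathcal{C})$, so it suffices to show that the Poisson bracket $\{-p^n+P,-\bar p+Q\}$, reduced modulo the ideal $I$ and modulo $t^2$, equals (up to an overall sign) the generating polynomial $\sum_{k=2}^{n}(\mathcal{C})_k\,p^{n-k}$, where $(\mathcal{C})_k$ denotes the left-hand side of condition $(\mathcal{C})$ with index $k$. With the canonical complexified Poisson bracket $\{f,g\}=\del_p f\,\del_z g-\del_z f\,\del_p g+\del_{\bar p}f\,\del_{\bar z}g-\del_{\bar z}f\,\del_{\bar p}g$ on $T^{*\C}\S$ in coordinates $(z,\bar z,p,\bar p)$, two simplifications occur for $F=-p^n+P$ and $G=-\bar p+Q$: the function $F$ is independent of $\bar p$, and $\del_{\bar p}G=-1$. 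Hence $\{F,G\}=\del_p F\,\del_z G-\del_z F\,\del_p G+\del_{\bar z}F$, a polynomial in $p$ alone, and the only reduction ever needed is the single relation $p^n\equiv P$.

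The heart of the argument is performing this reduction correctly, which I would organise through the grading that counts $t$-factors: each $t_k$ has order one, each $\mu_k$ order zero, and crucially $\mu_1=-\sum_{k=2}^{n-1}\tfrac{k}{n}t_k\mu_{k+1}$ has order one. Modulo $t^2$ the substitution $p^n\equiv P$ may be used at most once, since $P$ itself is order one: an order-one monomial $t\,p^m$ with $m\ge n$ is therefore simply discarded, whereas the unique order-zero contribution to the bracket, namely $-n\,p^{n-1}\del_z Q$, gets reduced exactly once and thereby becomes order one. After this single reduction $\{F,G\}$ is a polynomial of degree $\le n-1$ in $p$ whose coefficients are linear in the $t$'s.

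I would then collect the coefficient of each power of $p$. The coefficient of $p^{n-1}$ must vanish identically: the $\del\mu_1$ term, the reduced order-zero part, and the order-one part of $\del_p F\,\del_z G$ together cancel the contribution of $-\del_z F\,\del_p G$ precisely because of the chosen $\mu_1$, and this forced cancellation is what leaves exactly $n-1$ nontrivial constraints. For $2\le k\le n$, the coefficient of $p^{n-k}$ assembles $\delbar t_k$ (from $\del_{\bar z}F$), the terms $-\mu_2\del t_k$ and $-(l+1)\mu_{l+2}\del t_{k+l}$ (from $-\del_z F\,\del_p G$), and two families of $\del\mu_{l+2}\,t_{k+l}$ terms, one from the reduced order-zero part with coefficient $-n$ and one from the order-one part of $\del_p F\,\del_z G$ with coefficient $n-k-l$, which add to $-(l+k)$; the pure $\del\mu_2\,t_k$ coefficient similarly becomes $-n+(n-k)=-k$. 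Reindexing by $m=k+l$ and $j=l+2$, this coefficient is exactly $(\mathcal{C})_k$ up to the overall sign, which finishes the proof.

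I expect the main obstacle to be executing the reduction modulo $I$ modulo $t^2$ without error: one must recognise that $p^n\equiv P$ is applied once and only once, and keep strict track of which monomials are order one versus order two, since a careless reduction would either drop the order-one terms carrying the higher Beltrami differentials $\mu_k$ or illegitimately reintroduce quadratic $t$-terms. The second delicate point, the cancellation at $p^{n-1}$, is not a mere convenience but a necessity: it is forced by, and in fact characterises, the defining formula for $\mu_1$, and it is precisely what matches the number of surviving constraints with the $n-1$ conditions $(\mathcal{C})$ indexed by $k=2,\dots,n$.
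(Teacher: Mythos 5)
Your computation is correct, but there is an important point to flag first: this paper does not contain a proof of Proposition \ref{spectralcurveprop} at all --- it is imported as a black box from \cite{Fock-Thomas} (Proposition 5 there), so there is no in-paper argument to compare yours against; what you have written is, in effect, a reconstruction of the cited reference's computation. On its own merits your argument checks out. With the bracket $\{f,g\}=\del_p f\,\del_z g-\del_z f\,\del_p g+\del_{\bar p}f\,\del_{\bar z}g-\del_{\bar z}f\,\del_{\bar p}g$ one indeed gets $\{F,G\}=\del_p F\,\del_z G-\del_z F\,\del_p G+\del_{\bar z}F$, so only the relation $p^n\equiv P$ is ever invoked; the unique order-zero piece $-np^{n-1}\sum_{j\ge 2}\del\mu_j\,p^{j-1}$ reduces exactly once and becomes order one, and order-one monomials of $p$-degree at least $n$ are legitimately discarded since a further reduction would make them quadratic in $t$. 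Collecting the coefficient of $p^{n-k}$ for $2\le k\le n$ gives $\delbar t_k-k\,\del\mu_2\,t_k-\mu_2\del t_k-\sum_{l\ge 1}\bigl((k+l)\del\mu_{l+2}\,t_{k+l}+(l+1)\mu_{l+2}\del t_{k+l}\bigr)$, which is minus the left-hand side of condition $(\mathcal{C})_k$, exactly as you state, and the coefficient of $p^{n-1}$, namely $-n\,\del\mu_1-\sum_{k=2}^{n-1}k\,\del(t_k\mu_{k+1})$, vanishes identically by the defining formula $\mu_1=-\sum_{k=2}^{n-1}\tfrac{k}{n}t_k\mu_{k+1}$; combined with Theorem \ref{conditionC}, this yields the stated equivalence, and your use of that theorem is not circular since it is established independently of the proposition. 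The only minor imprecisions are cosmetic: the order-zero contribution is $-np^{n-1}$ times the order-zero part of $\del_z Q$ (the $\del\mu_1$ term inside $\del_z Q$ is order one, as you in fact treat it later), and strictly speaking ``$I\in T^*\T^n$'' should be read as ``the tuple $(t_k,\mu_k)$ represents a point of $\cotang$,'' a looseness inherited from the original statement rather than introduced by you.
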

The theorem states that the spectral curve is Lagrangian ``near the zero-section'' iff the ideal comes from a point in $\cotang$. This implies that the $t_k$ satisfy conditions $(\mc{C})$.

\section{Geometric description of character varieties via higher complex structures}\label{geometric-description}

In this section, we come to the main result of the paper: a formal description of the character variety in terms of the cotangent bundle of the moduli space of higher complex structures. 

In the description of the character variety from Theorem \ref{mainthm-1}, there are two problems:
\begin{itemize}
	\item We don't know how to prescribe the higher order terms of the parameters $\hat{t}_k$.
	\item Using a gauge transformation to put the $(1,0)$-part of a flat connection into Frobenius form does not take all of the gauge freedom. Hence, we can associate different collections of tensors to the same flat connection.
\end{itemize}
We will see how to overcome both problems. In particular we will see that different collections of tensors coming from the same flat connection are equivalent under higher diffeomorphisms. This explains the appearance of the moduli space of higher complex structures. Here is our main theorem:

\begin{thm}\label{mainthm-2}
There is a well-defined formal map
\begin{equation}\label{mainmap}
\omega: \cotang\to \Rep(\pi_1\S,\SL_n(\C))
\end{equation}
whose image is an open subset of the character variety. Since both manifolds have the same dimension, $\omega$ is locally a homeomorphism on the set of convergence.
\end{thm}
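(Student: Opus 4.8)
The plan is to build $\omega$ as a formal inverse of the tensor-extraction procedure of Theorem \ref{mainthm-1}, and then to deduce every assertion of the statement from this inversion together with the equality of dimensions.

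First I would define $\omega$ on a representative. A point of $\cotang$ is, by Theorem \ref{conditionC}, a $\Symp_0(T^*\S)$-orbit of tensors $(\mu_2,\dots,\mu_n,t_2,\dots,t_n)$ satisfying conditions $(\mathcal{C})$ from \eqref{condition-C-2}. Fix such a representative. The idea is to reverse Theorem \ref{mainthm-1}: feed these leading tensors, subject to the normalization \eqref{cond-on-parameters}, into a flat $h$-connection of the Frobenius shape \eqref{can-form-general}, and set $h=1$ to obtain a monodromy representation. Concretely, I run the rational WKB recursion \eqref{rational-WKB-ansatz} on the system \eqref{flat-section-h-sln}. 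By the proof of Theorem \ref{mainthm-1}, the compatibility conditions at the fractional orders $h^{k/n}$ for $1\le k<n$ are exactly conditions $(\mathcal{C})$, hence hold by hypothesis, and I can solve successively for $s_{1/n},\dots,s_{(n-1)/n}$; the non-degeneracy $t_n\neq 0$ (which is also what cuts out the open locus of Theorem \ref{mainthm-1}) makes $\del s_{1/n}$ nonvanishing, so every subsequent compatibility relation is linear and invertible in its top unknown.

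This brings up the first of the two problems flagged before the statement, namely prescribing the higher-order terms of $\hat t_k$. I expect the integer orders $h^q$, $q\ge 1$, to yield one compatibility relation each, solvable for a higher Taylor coefficient of $\hat\mu_k$ once the higher coefficients of $\hat t_k$ are fixed, exactly as in the $n=2$ proposition, where uniformization supplies a canonical choice of $\hat t(h)$. I would therefore either pin down the $\hat t_k(h)$ by a canonical rule generalizing the Poincaré/complex-projective choice, or, more robustly, show that any two admissible choices differ by a gauge transformation of the reconstructed connection and hence produce the same monodromy class. Carrying out this recursion to all formal orders yields a flat $h$-connection, and evaluating at $h=1$ a well-defined conjugacy class of representations on the set where the WKB series converge.

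The hard part is well-definedness with respect to the $\Symp_0(T^*\S)$-action, i.e. overcoming the second flagged problem. Putting $A_1$ into Frobenius form does not exhaust the gauge group, and I must identify the residual gauge freedom with the higher diffeomorphisms. The key is the Hamilton--Jacobi/Lagrangian picture: by Proposition \ref{spectralcurveprop} the pair $(t_k,\mu_k)$ is encoded in the spectral curve $\tilde\Sigma\subset T^{*\C}\S$, and a higher diffeomorphism acts as a symplectomorphism of $T^{*\C}\S$ on $\tilde\Sigma$. I would show that this action corresponds precisely to a change of the cyclic generator $v$ (equivalently, of the basis $B=(v,\nabla v,\dots,\nabla^{n-1}v)$), hence to a gauge transformation that leaves the monodromy unchanged; this is what makes $\omega$ descend to the quotient $\cotang$, and I expect it to be the main obstacle. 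The topological conclusions then follow formally: by Theorem \ref{mainthm-1} the extraction map is a one-sided inverse of $\omega$, so $\omega$ is injective with image in the open Frobenius locus; the dimension count $\dim_\C\cotang=2(n^2-1)(g-1)=\dim_\C\Rep(\pi_1\S,\SL_n(\C))$ from Theorem \ref{mainresultncomplex} forces this image to be open; and an injective formal map between equidimensional manifolds with a continuous inverse on its image is a local homeomorphism on the set of convergence.
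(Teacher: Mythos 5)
Your overall strategy coincides with the paper's: invert Theorem \ref{mainthm-1} via the rational WKB recursion, read the integer-order compatibility conditions as equations determining the higher Taylor coefficients of the $\hat\mu_k$, identify the residual Frobenius gauge freedom with higher diffeomorphisms, and conclude openness by a dimension count (your ``one-sided inverse plus invariance of domain'' is an acceptable variant of the paper's argument that $d\omega$ is injective). The genuine gap is in how you handle the higher-order terms of the $\hat t_k$. Your ``more robust'' alternative --- that any two admissible completions $\hat t_k(h)$ of the leading data give gauge-equivalent connections, hence the same monodromy class --- is false. Already for $n=2$: the choices $\hat t(h)=ht$ and $\hat t(h)=ht+h^2q$, with $q\neq 0$ a holomorphic quadratic differential, yield at $h=1$ two distinct complex projective structures subordinate to the same complex structure, and by Poincar\'e's theorem these have distinct monodromies. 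So the choice genuinely must be pinned down canonically, and this is not a routine generalization of the $n=2$ case: since the $\hat t_k$ are not tensors, the paper first passes through the unipotent ``Drinfeld--Sokolov to conformal'' gauge of \cite{bonora1992covariant} to produce honest tensors $u_k$ for $k\geq 3$, imposes $u_k=f_k(h)t_k$, and treats $u_2$ separately via the uniformizing projective structure and a Schwarzian term as in \eqref{higher-order-t2}.

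The second gap is structural: you treat ``canonical prescription of the $\hat t_k(h)$'' and ``descent to the $\Symp_0(T^*\S)$-quotient'' as two independent tasks, but they cannot be solved independently --- coupling them is the crux of the paper's Step 3. A rigid canonical rule would in general fail to be equivariant under the gauge transformation associated to a Hamiltonian $H=v_1+v_2p+\dots+v_np^{n-1}$, realized in the paper by the operator $\hat H=v_1+v_2\nabla+\dots+v_n\nabla^{n-1}$ acting through $\delta v=\frac{1}{h}\hat Hv$, whose effect on the parameters is a commutator modulo a left ideal of differential operators. Only at leading order is equivariance automatic, because the semiclassical limit of the commutator is the Poisson bracket governing \eqref{var-P-Q}; at each higher order $h^l$ the paper must \emph{impose} equivariance as an equation on the free coefficients $f_k^{(l)}$, solved by a lexicographic induction (and requiring $t_k\neq 0$). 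Your proposal correctly guesses the identification ``higher diffeomorphism $=$ change of cyclic vector $v$,'' but without the adjustable formal reparametrizations $f_k(h)$ the diagram you need to commute simply does not commute, and $\omega$ would not descend to $\cotang$. (Two minor points: the non-degeneracy driving your recursion is indeed $t_n\neq 0$, since $(\del s_{1/n})^n=(-1)^n t_n$, but the open locus of Theorem \ref{mainthm-1} is cut out by the cyclic-vector condition, not by $t_n\neq 0$; and Step 3 needs all $t_k\neq 0$, which is why the paper flags that $\omega$ may need modification where some $t_k=0$.)
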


We can see our main result as the generalization of the case $n=2$, where we have seen that an open subset of $\Rep(\pi_1\S,\SL_2(\C))$ is described by the cotangent bundle of Teichmüller space (see end of Section \ref{description-for-n2}). It gives a formal reconstruction of a flat connection from the tensors we have extracted in Theorem \ref{mainthm-1}.

The idea of the proof is to prescribe in a canonical way the higher order terms of the $\hat{t}_k$ from a point in $\cotang$. In particular this uses the uniformisation theorem and the reparametrization freedom of $h$. Then, we can deduce the higher order terms of the $\hat\mu_k$ from the compatibility conditions of the rational WKB method. Finally, we have to check that the construction is well-defined on the equivalence classes.

\begin{proof}
Take a point $[(t_k,\mu_k)]_{2\leq k\leq n} \in \cotang$ and choose a representative $(t_k,\mu_k)_{2\leq k\leq n}$. We want to associate a connection of the form (see Equation \eqref{can-form-general}):
\begin{equation}\label{can-form-general-2}
D_h = hd+\begin{pmatrix} &&& \hat{t}_n(h)\\ 1&&&\vdots\\ & \ddots &&\hat{t}_2(h) \\ &&1&0 \end{pmatrix}+\begin{pmatrix} \hat\mu_1(h) &*&* & * \\ \hat\mu_2(h) &*&* & * \\ \vdots &*&*&*\\ \hat\mu_n(h)& *&* &* \end{pmatrix}
\end{equation}
with $\hat{\mu}_k(h) = \mu_k+\mathcal{O}(h)$ and $\hat{t}_k(h)=ht_k+\mc{O}(h^2)$. We proceed in several steps.

\medskip
\underline{Step 1:} Higher order terms of $\hat{t}_k$.
\vspace{0.2cm}

\noindent Let us explain how to obtain the higher order terms of $\hat{t}_k$. As we have already noticed before, the $\hat{t}_k$ are not tensors. Nevertheless, there is a well-known way to combine them to get tensors (for $k\geq 3$).

Denote by $e_{i,j}$ the matrix units and by $(H,E,F)$ the standard generators of $\mathfrak{sl}_2$ ($H=e_{1,1}-e_{2,2}, E=e_{1,2}, F=e_{2,1}$). Consider the unique irreducible representation $\rho_n$ of $\mathfrak{sl}_2$ of dimension $n$ such that $\rho_n(F)=\sum_{i=1}^{n-1} e_{i+1,i}:=J_-$ and $\rho_n(E):=J_+ \in \mathrm{Span}(e_{i,i+1})_{1\leq i\leq n-1}$.
Then we have the following (see \cite[Section 4.3]{bonora1992covariant} upon \cite{balog1990toda}):
\begin{lemma}
There is an upper unipotent gauge transforming the $(1,0)$-part of the connection from Equation \eqref{can-form-general-2} to 
$$h\partial+J_-+\sum_{k=1}^{n-1} N_k u_kJ_+^k$$
where $N_k$ are normalization constants satisfying $N_k\sum_{i=1}^{n+1-k}(J_+^k)_{i+k,i}=1 \,\forall\, k$.
Then, the $u_k$ are tensors of type $(k,0)$ for all $k\geq 3$. Only $u_2$ transforms like a complex projective structure (see formula \eqref{cp1-transfo}).
\end{lemma}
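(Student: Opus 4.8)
I would split the statement into an algebraic part (the gauge exists and is unique) and a geometric part (the transformation rule for the $u_k$), both of which are instances of the Drinfeld--Sokolov/Kostant reduction for the principal $\mf{sl}_2$-triple $(J_-,H,J_+)$. Throughout I would work with the principal grading $\mf{sl}_n=\bigoplus_m \g_m$ induced by $\operatorname{ad}(H)$: in the matrix-unit basis $e_{i,j}$ has grade $j-i$, so $J_-\in\g_{-1}$, $J_+\in\g_1$, and $J_+^k\in\g_k$. The decomposition of $\mf{sl}_n$ as an $\mf{sl}_2$-module under the principal embedding is $\bigoplus_{m=1}^{n-1}V_{2m}$, and the highest-weight vector of the spin-$m$ summand sits in $\g_m$ and is a multiple of $J_+^m$.

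\textbf{Existence and uniqueness of the gauge.} The off-diagonal data of the $(1,0)$-part is $A_1-J_-=\sum_{k=2}^{n}\hat t_k\,e_{n+1-k,n}$, and since $e_{n+1-k,n}$ has grade $k-1\geq 1$ the entire perturbation lies in $\g_{\geq 1}$. The key structural input is Kostant's decomposition: for each $m\geq 1$,
$$\g_m=\C\,J_+^m\ \oplus\ \operatorname{im}(\operatorname{ad}J_-)\big|_{\g_m},$$
where $\C\,J_+^m=\ker(\operatorname{ad}J_+)\cap\g_m$ is the highest-weight line. I would then run a grade-by-grade induction with an upper-unipotent gauge $g=\exp(X)$, $X\in\g_{\geq 1}$. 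At each grade $m$ the term $[X_{m+1},J_-]$ produced by $g$ sweeps out all of $\operatorname{im}(\operatorname{ad}J_-)|_{\g_m}$, because $\operatorname{ad}J_-\colon\g_{m+1}\to\g_m$ is injective (its source has positive grade, hence contains no lowest-weight vectors) with exactly that image. So I can cancel the $\operatorname{im}(\operatorname{ad}J_-)$-component at each grade in turn, leaving only multiples of $J_+^m$, i.e. the canonical form $h\del+J_-+\sum_k N_k u_k J_+^k$. The inhomogeneous term $h(\del g)g^{-1}$ again lies in $\g_{\geq 1}$ and carries an extra power of $h$, so it feeds consistently into the same recursion without altering the leading slice; uniqueness follows because the stabiliser of the slice $J_-+\Span(J_+^k)$ inside $\exp(\g_{\geq 1})$ is trivial, and the $N_k$ are fixed once and for all to normalise the entries of $J_+^k$.

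\textbf{Transformation rule.} For the coordinate behaviour I would use that the whole construction is covariant for the principal $\SL_2$. A holomorphic change $z\mapsto w$ acts on the scalar operator $h^n\del^n-\hat t_2 h^{n-2}\del^{n-2}+\dots$ (equivalently on sections of $K^{-(n-1)/2}$), hence on $A_1$; its semisimple part is the torus element $\exp\big(\tfrac12\log(dz/dw)\,H\big)$, which rescales the grade-$k$ slice $J_+^k$ by $(dz/dw)^k$. This forces $u_k$ to pick up exactly the weight-$k$ factor $(dz/dw)^k$, making it a tensor of type $(k,0)$. The remaining, inhomogeneous contributions come from the higher derivatives of the transition function; after re-running the grade-by-grade normalisation of the previous paragraph these are grade-lowering, and the universal obstruction to full tensoriality is a projective connection that accumulates in the lowest positive slice, namely the coefficient $u_2$ of $J_+^1$. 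Matching against the $n=2$ computation identifies this correction with the Schwarzian, so that $u_2$ transforms as in \eqref{cp1-transfo}, i.e. as a complex projective structure, while all higher $u_k$ with $k\geq 3$ remain genuine tensors.

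\textbf{Main obstacle.} The delicate point is the second part: showing that the projective anomaly is confined to $u_2$ and that no lower-derivative terms contaminate the higher $u_k$. This is precisely the classical statement that, in the $W_n$-algebra basis produced by the Drinfeld--Sokolov gauge, the spin-$2$ field is a projective connection (the stress tensor) while the spin-$\geq 3$ fields are primary. I would establish it by tracking the coordinate change through the normalisation steps, each of which is $\operatorname{ad}(H)$-homogeneous, so that the inhomogeneous (derivative) contributions are strictly grade-lowering and can only survive in the bottom slice. The explicit normalisation constants $N_k$ and the exact form of the anomaly are the routine computations of \cite{bonora1992covariant, balog1990toda}, which I would cite rather than reproduce.
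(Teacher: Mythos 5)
The paper does not actually prove this lemma: it quotes it from \cite[Section 4.3]{bonora1992covariant} and \cite{balog1990toda}. Your Kostant-decomposition recursion is exactly the mechanism behind those references, and your first half (existence and uniqueness of the upper unipotent gauge, via $\g_m=\C J_+^m\oplus\operatorname{ad}J_-(\g_{m+1})$ and injectivity of $\operatorname{ad}J_-$ on positive grades) is correct and complete. The problems are in your second half.

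First, your weight count is off by one. Conjugation by the torus element $(dz/dw)^{H/2}$ is not the whole coordinate change: the $(1,0)$-part of a connection is a coefficient of $dz$, so passing to the $w$-coordinate first multiplies the whole matrix by $dz/dw$. That Jacobian factor is what restores the subdiagonal to $1$'s (your torus conjugation alone sends $J_-$ to $(dw/dz)J_-$, so by itself it does not preserve the normal form), and with it the coefficient of the grade-$k$ slice $J_+^k$ scales by $(dz/dw)^{k+1}$, not $(dz/dw)^k$. This is consistent with the lemma once one notes that the displayed sum has an indexing slip: as the $n=3$ example shows ($u_2=\tfrac12\hat t_2$ sits on $J_+^1$ and $u_3=\hat t_3-\tfrac12 h\del\hat t_2$ sits on $J_+^2$), the coefficient of $J_+^{k-1}$ is $u_k$, so ``weight $=$ grade $+1$'' is exactly what makes $u_k$ of type $(k,0)$. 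Taken literally, your accounting assigns the coefficient of $J_+^k$ weight $k$, which is wrong.

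Second, and more seriously, your argument confining the anomaly to $u_2$ is a non sequitur. ``The inhomogeneous contributions are strictly grade-lowering'' does not imply ``they can only survive in the bottom slice'': a grade-lowering correction generated while re-normalising the grade-$3$ slice can perfectly well land in the grade-$2$ slice, i.e.\ contaminate $u_3$ (or any intermediate $u_k$) with derivative terms. Concretely, the term to be absorbed after the torus conjugation is the grade-zero piece $\tfrac{h}{2}(\del\log\lambda)H$, and the compensating unipotent gauge that removes it sprays derivative terms into \emph{every} positive grade; the assertion that these cancel in every slice except the lowest one --- equivalently, that the highest-weight-gauge fields of spin $\geq 3$ are genuine primaries rather than quasi-primaries --- is precisely the nontrivial content of the lemma. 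Establishing it needs either the explicit computation carried out in \cite{bonora1992covariant, balog1990toda}, or a genuine structural argument (e.g.\ equivariance under M\"obius transformations, for which the anomaly vanishes, combined with a classification of the possible derivative cocycles in weight $k\geq 3$). Since you defer exactly this point to the same references the paper cites, your proposal, like the paper, proves the algebraic half and cites the geometric half; the difference is that the reason you offer for the geometric half, if read as a proof, would fail.
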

In \cite{bonora1992covariant}, the authors speak about a change between the ``Drinfeld--Sokolov gauge'' to the ``conformal gauge'' (or ``Dublin gauge''). For $n=3$ for example, we have $$u_3= \hat{t}_3-\frac{1}{2}h\del\hat{t}_2\;\text{ and }\; u_2 = \frac{1}{2}\hat{t}_2.$$

Now, since $u_k$ are tensors for $k\geq 3$, we can choose 
\begin{equation}\label{higher-order-terms-det}
u_k = f_k(h)t_k
\end{equation}
where $f_k(h)=h+f_k^{(2)}h^2+...$ is a formal function in $h$, which can be thought of as a reparametrization of the parameter $h$. We need these functions $f_k$ to ensure our map $\omega$ to be well-defined (see Step 3 below). We then use the inverse transformation to determine the higher order terms of $\hat{t}_k(h)$. By \cite[Equation (4.3.3a)]{bonora1992covariant}, we have $\hat{t}_k(h) = f_k(h)u_k+\text{ higher order terms}$. Thus, the lowest order term is preserved: $\hat{t}_k(h) = ht_k+\mc{O}(h^2)$.

For $u_2$, we procede as follows: a point in $\cotang$ gives in particular a point in $\T^n$. A higher complex structure induces a complex structure (which corresponds to the map $\T^n \to \T^2$). We equip our surface $\S$ with that complex structure and use the Poincaré uniformisation theorem to deduce a complex projective structure, described by the multiple of the Schwarzian derivative of some function $u$. The space of complex projective structures, which is an affine space, now becomes isomorphic to $H^0(K^2)$, holomorphic quadratic differentials. We impose
\begin{equation}\label{higher-order-t2}
\hat{t}_2(h) = f_2(h)t_2+\frac{n(n^2-1)}{12}h^2S(u,z)
\end{equation}
where $f_2(h) = h+f_2^{(2)}h^2+...$ is again a formal series and $S(u,z)$ denotes the Schwarzian derivative.

In this way, we get in a canonical way all higher order terms of $\hat{t}_k$ from a point in $\cotang$.

\medskip
\underline{Step 2:} Flat connection.
\vspace{0.2cm}

\noindent Now that we have $\hat{t}_k(h)$ to all order, we can associate a flat connection of the form \eqref{can-form-general-2}. The higher order terms of $\hat\mu_k$, denoted by $\mu_k^{(i)}$, are determined by the flatness.

Indeed, we have seen in Theorem \ref{mainthm-1} that the compatibility conditions for $s_{k/n}$ for $k<n$ are conditions $(\mathcal{C})$ and that for all $k\geq n$, we can consider the compatibility condition as an equation on the higher order of the $\hat\mu_k$. To see that, consider the first appearance of $s_{k/n}$ in system \eqref{flat-section-h-sln}. Write $k=\alpha n +\beta$, the Euclidean division. Since we know all orders of $\hat{t}_k(h)$, it is sufficient to consider the second equation of \eqref{flat-section-h-sln}:
\begin{equation}\label{step2eq}
(-h\delbar-\hat\mu_1+\hat\mu_2h\del-\hat\mu_3h^2\del^2+...+(-1)^{n}\hat\mu_n\del^{n-1})\psi = 0.
\end{equation}
With the rational WKB ansatz, at level $h^{(\alpha n+\beta)/n}$, there is the term $\mu_{\beta+1}^{(\alpha)}(\del s_{1/n})^{\beta}$, which is the first appearance of $\mu_{\beta+1}^{(\alpha)}$ in \eqref{step2eq}. Hence, the compatibility condition for $s_{k/n}$ can be seen as an equation for $\mu_{\beta+1}^{(\alpha)}$.

Finally, since the rational WKB method gives a basis of flat sections, we know that the connection is flat.

\medskip
\underline{Step 3:} Action of higher diffeomorphisms.
\vspace{0.2cm}

\noindent It remains to show that whenever we choose another representative of $[(t_k,\mu_k)]_{2\leq k\leq n}$, we get a gauge equivalent flat connection. By definition, a higher diffeomorphism admits an infinitesimal generator, a function $H$ on $T^*\S$, so it is sufficient to prove that for an infinitesimal change of $(t_k,\mu_k)$, the connection changes by an infinitesimal gauge action.

The variation of the parameters $(t_k,\mu_k)$ under $H$ is described via Poisson brackets. Put $P(p) = t_2p^{n-2}+...+t_n$, $Q(p)=\mu_1+\mu_2p+...+\mu_np^{n-1}$ and $I=\langle p^n-P(p),-\bar{p}+Q(p)\rangle$. Then the variation is described by (see Equation (2.4) in \cite{thomas2020higher})
\begin{equation}\label{var-P-Q}
\delta P = \{H,-p^n+P\} \mod I \;\text{ and }\; \delta Q = \{H,-\bar{p}+Q\} \mod I.
\end{equation}

To describe the associated gauge transformation, recall that we have a basis of the vector bundle of the form $B=(v,\nabla v,...,\nabla^{n-1}v)$. The parameters $\hat{t}_k$ and $\hat\mu_k$ are described by expressing $\nabla^nv$ and $\bar\nabla v$ in this basis (see Equations \eqref{eqqq1}).

To the Hamiltonian $H=v_1+v_2p+...+v_np^{n-1}$, we associate the differential operator $\hat{H} = v_1+v_2\nabla +...+v_n\nabla^{n-1}$. This describes an infinitesimal change $v\mapsto v+\varepsilon\delta v$ where 
\begin{equation}\label{associated-gauge}
\delta v=\frac{1}{h}\hat{H}v
\end{equation}
which induces a transformation of the whole basis $B$.
The matrix $X$ of that gauge transformation is obtained by the matrix of $A_2$, the $(0,1)$-part in the standard form \eqref{can-form-general-2}, where we replace all $\hat{\mu}_k$ by $v_k/h$. 

Let us determine the variation of $\hat{t}_k$ and $\hat\mu_k$ under that gauge transformation $X$.
Put $\hat{P}=\hat{t}_2\nabla^{n-2}+...+\hat{t}_n$ and $\hat{Q}=\hat\mu_1+\hat\mu_2\nabla+...+\hat\mu_n\nabla^{n-1}$. By definition we have $\nabla^n v= \hat{P}v$. The variation $\delta \hat{P}$ satisfies
$$\nabla^n(v+\frac{\varepsilon}{h}\hat{H}v) = (\hat{P}+\varepsilon\delta\hat{P})(v+\frac{\varepsilon}{h}\hat{H}v)$$
which gives 
\begin{equation}\label{var-hat-P}
\delta \hat{P} = \frac{1}{h}[\hat{H},-\nabla^n+\hat{P}] \mod \hat{I}
\end{equation}
where $\hat{I} = \langle \nabla^n-\hat{P},-\bar{\nabla}+\hat{Q} \rangle$ is a left ideal of differential operators. The same argument shows that
\begin{equation}\label{var-hat-Q}
\delta\hat{Q} = \frac{1}{h}[\hat{H},-\bar{\nabla}+\hat{Q}] \mod \hat{I}.
\end{equation}

We have to show that our construction in Step 1 is equivariant with respect to the action of higher diffeomorphisms and the associated gauge transform $X$.
For the lowest terms, given by our parameters $(\mu_k, t_k)_{2\leq k \leq n}$, the variation is given as the limit of Equations \eqref{var-hat-P} and \eqref{var-hat-Q} for $h\to 0$. By the general property of Poisson brackets being the semi-classical limit of the commutator, we precisely get Equations \eqref{var-P-Q}.

For the higher order terms, we claim that we can adjust, at least formally, the coefficients $f_k^{(l)}$ of the functions $f_k$ in order to make the action equivariant. To be more precise, we have to check that the variation of the $\hat{t}_k$ is given by 
$$\delta \hat{t}_2\nabla^{n-2}+...+\delta \hat{t}_n = \frac{1}{h}[v_1+v_2\nabla+...+v_n\nabla^{n-1},-\nabla^n+\hat{t}_2\nabla^{n-2}+...+\hat{t}_n] \mod \hat{I}.$$
The $h^l$-term of the coefficient of $\nabla^{n-k}$ has a contribution of $t_k\delta f_k^{(l)}$ and variations of only lower order terms, where we have put a lexicographic order on $f_k^{(l)}$: $f_k^{(l)}$ is of higher order than $f_{k'}^{(l')}$ if $l>l'$ or $l=l'$ and $k>k'$. Thus, we can take it as an equation on the coefficients of $f_k$, assuming $t_k\neq 0$.

The uniformizing complex projective structure entering the definition of $\hat{t}_2$ is equivariant with respect to diffeomorphisms of $\S$ isotopic to the identity, which are precisely the restriction of higher diffeomorphisms on $\S\subset T^*\S$.
Once the $\hat{t}_k$ transform correctly, the variation of the $\hat\mu_k$ is automatically equivariant since they are uniquely determined by the lowest terms $\mu_k$ and the flatness (via the fractional WKB method). Therefore, an infinitesimal change in the representative of the point in $\cotang$ corresponds to an infinitesimal gauge transformation of the associated flat connection. Hence, the map $\omega$ is well-defined.

To show that the image is an open subset of the character variety, note that an infinitesimal gauge preserving the Frobenius form of the $(1,0)$-part of the connection is necessarily of the form $X$ above. Indeed the existence of a basis of the form $(v,\nabla v,...,\nabla^{n-1}v)$ imposes a variation of $\delta v$ given by Equation \eqref{associated-gauge} and the rest is uniquely determined by $\delta v$. Therefore, the differential $d\omega$ is injective.

Finally, by Theorem \ref{mainresultncomplex}, the dimension of $\cotang$ equals twice the dimension of the Hitchin basis, which equals the dimension of the character variety. Hence the image is open and we get locally a homeomorphism.
\end{proof}

We believe in analogy with the theorem of Gallo--Kapovich--Marden \cite{gallo2000monodromy} that the image of $\omega$ is an open \emph{dense} subset of the character variety. Whereas $\T^n$ describes a generalization of complex structures, the space $\cotang$ describes a generalization of complex projective structures. The study of the latter is work in progress.

As mentioned in the introduction, the map $\omega: \cotang \to \Rep(\pi_1\S,\SL_n(\C))$ can be seen as an analog of the non-abelian Hodge correspondence. Since there is a natural projection map $\cotang \to \T^n$ and an inclusion $\T^n\subset\cotang$, we conjecture in analogy with the Hitchin section:
\begin{conj}
The restriction of $\omega$ to the zero-section $\T^n\subset \cotang$ has its image inside the real character variety $\Rep(\pi_1\S,\SL_n(\R))$. Hence, there is a canonical homeomorphism between $\T^n$ and the Hitchin component.
\end{conj}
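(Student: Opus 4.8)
The conjecture splits into two parts: (i) the reality statement $\omega(\T^n)\subseteq\Rep(\pi_1\S,\SL_n(\R))$, and (ii) the identification of this image with the Hitchin component. As the author indicates, (ii) is a soft consequence of (i), so the plan is to isolate (i) as the crux and then run a standard connectedness-and-dimension argument for (ii). First I would reformulate (i) in terms of a flat real structure: a representation $\rho\colon\pi_1\S\to\SL_n(\C)$ is conjugate into the split real form $\SL_n(\R)$ precisely when the associated flat bundle $(V,D)$ carries a $D$-parallel antilinear involution $\sigma$ with $\sigma^2=\id$. Thus for a point on the zero-section $[(\mu_k)]\in\T^n\subset\cotang$ (all $t_k=0$), I would try to construct such a $\sigma$ directly on the connection $\omega([(\mu_k)])$ produced in Theorem \ref{mainthm-2}.

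The natural source of this involution is the reality condition $I+\bar{I}=\langle p,\bar{p}\rangle$ built into the very definition of a higher complex structure. On the zero-section the first equation of system \eqref{flat-section-h-sln} reduces to the oper equation of the uniformizing projective structure (which is Fuchsian, hence real), while the second equation encodes the higher Beltrami differentials $\mu_k$. The anti-holomorphic involution $(z,p)\mapsto(\bar{z},\bar{p})$ relates $I$ and $\bar{I}$, and the maximal-ideal condition $I+\bar{I}=\langle p,\bar{p}\rangle$ is exactly the nondegeneracy that makes these two descriptions transverse. Concretely, the rational WKB solution $\psi$ is assembled from $\del$-derivatives via the first equation, and I would use the second equation to re-express the $\delbar$-derivatives, so that $\psi\mapsto\bar{\psi}$, followed by this re-expression, defines an antilinear map on the space of flat sections; the condition $I+\bar{I}=\langle p,\bar{p}\rangle$ should guarantee that this map is an involution. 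For the base point $\mu_k=0$ this recovers the fact that $\omega$ sends the origin to the $(n-1)$-st symmetric power of the uniformizing $\SL_2$-oper, a representation manifestly valued in $\PSL_2(\R)\hookrightarrow\SL_n(\R)$. It then remains to check that turning on the $\mu_k$ preserves $\sigma$, i.e.\ that the formal reconstruction of all higher-order terms $\hat{\mu}_k(h)=\mu_k+\mathcal{O}(h)$ commutes with conjugation.

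Granting (i), I would prove (ii) as follows. By Theorem \ref{mainresultncomplex} the space $\T^n$ is contractible, hence connected, so $\omega(\T^n)$ is a connected subset of $\Rep(\pi_1\S,\SL_n(\R))$ containing the Fuchsian base point. Since the Hitchin component is a connected component of $\Rep(\pi_1\S,\SL_n(\R))$ by Hitchin \cite{hitchin1992lie}, it is clopen there, and a connected set meeting a clopen set is contained in it; thus $\omega(\T^n)$ lies in the Hitchin component. The real dimensions match: $\dim_\R\T^n=2(n^2-1)(g-1)=(2g-2)(n^2-1)$ is exactly the dimension of the Hitchin component. Restricting the local homeomorphism of Theorem \ref{mainthm-2}, whose differential is injective, and invoking invariance of domain shows that $\omega|_{\T^n}$ is an open map into the equidimensional Hitchin component; a properness argument then upgrades this to a covering map, and since $\T^n$ is connected and the Hitchin component is simply connected, the covering is a homeomorphism.

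The hard part is unquestionably the reality statement (i): while the condition $I+\bar{I}=\langle p,\bar{p}\rangle$ supplies the germ of the real structure $\sigma$, promoting it to a $D$-parallel involution requires controlling the entire formal expansion of the $\hat{\mu}_k(h)$ under conjugation, for which the classical input---an equivariant harmonic metric solving Hitchin's equations---has no direct analog in the purely formal WKB framework. A secondary obstacle is that the whole construction is formal, so both the continuity of $\omega|_{\T^n}$ and the properness needed in (ii) presuppose the convergence questions deliberately set aside in this paper; an honest proof of (ii) would have to be carried out on the locus of convergence, in the spirit of the Gallo--Kapovich--Marden theorem \cite{gallo2000monodromy} invoked for $n=2$.
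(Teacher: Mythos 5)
The statement you are proving is a conjecture that the paper explicitly leaves open: after stating it, the author writes that the link to Hitchin components ``is still work in progress,'' and the only route sketched in the paper is different from yours --- namely, to exhibit a gauge in which the flat connection \eqref{can-form-general-2} takes the Higgs-bundle-like form $\lambda\Phi+D_A+\lambda^{-1}\Phi^*$ with $\lambda=h^{-1}$, the formal analog of the harmonic-metric input in non-abelian Hodge theory. So there is no proof in the paper to match, and your proposal must stand on its own. It does not: your part (i) is a strategy, not an argument. The passage from the pointwise algebraic condition $I+\bar{I}=\langle p,\bar{p}\rangle$ to a $D$-parallel antilinear involution $\sigma$ is exactly the content of the conjecture, and you never construct it --- ``should guarantee that this map is an involution'' and ``it then remains to check that turning on the $\mu_k$ preserves $\sigma$'' defer precisely the step where all the difficulty lives, as you yourself concede in your final paragraph. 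In the classical case this reality is bought with hard analysis (Hitchin's equations / the equivariant harmonic metric), and nothing in the formal WKB reconstruction of the $\hat\mu_k(h)$ is shown to interact with complex conjugation at all; conjugation exchanges $\partial$ and $\delbar$, i.e.\ the two equations of \eqref{flat-section-h-sln}, which play completely asymmetric roles in the rational WKB ansatz, so equivariance of the entire formal expansion is not a routine check.

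There is also a concrete obstruction you overlook, which the paper itself flags immediately after the conjecture: it is not clear that $\omega$ is even \emph{defined} on the zero-section. Step 3 of the proof of Theorem \ref{mainthm-2} determines the reparametrization coefficients $f_k^{(l)}$ from equations whose leading coefficient is $t_k$ (the term $t_k\,\delta f_k^{(l)}$), so well-definedness of $\omega$ is only established where $t_k\neq 0$; on the locus $t_k=0$ the paper suggests the prescription \eqref{higher-order-terms-det} may need to be modified to involve the $\mu_k$. Any proof of the conjecture must first repair or circumvent this, and your proposal restricts $\omega$ to $\T^n$ as if it were already known to make sense there. Finally, in part (ii) your soft argument (connectedness of $\T^n$ from Theorem \ref{mainresultncomplex}, the clopen Hitchin component, matching dimensions, invariance of domain) is indeed the kind of deduction the paper has in mind when it says part (ii) follows easily from part (i), but your upgrade from open map to homeomorphism rests on an unproved properness claim; and since $\omega$ is only a formal map, continuity, openness and properness are not even meaningful outside a locus of convergence that neither you nor the paper controls.
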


Note that it is not obvious whether $\omega$ is defined for $t_k=0$ (see Step 3). It might be necessary to change $\omega$. We imposed $u_k=f_k(h)t_k$ (see Equation \eqref{higher-order-terms-det}) to determine the higher order terms of $\hat{t}_k$ ($k\geq 3$). It might be necessary to include the higher Beltrami differentials $\mu_k$ in that condition. The aim is to show that there is a gauge in which the flat connection \eqref{can-form-general-2} takes the form 
\begin{equation*}
\lambda\Phi+D_A+\lambda^{-1}\Phi^*
\end{equation*}
with $\lambda=h^{-1}$, $D_A$ a connection and $(\Phi, \Phi^*)$ a pair of 1-forms with values in the endomorphisms of $V$. This is in analogy to Hitchin's approach with Higgs bundles. 

This link to Hitchin components is still work in progress.



\bibliographystyle{alpha}
\bibliography{ref}

\end{document}